\documentclass[12pt,reqno]{amsart}

\setlength{\textheight}{23cm}
\setlength{\textwidth}{16.5cm}
\setlength{\topmargin}{-0.8cm}
\setlength{\parskip}{0.3\baselineskip}
\hoffset=-1.9cm

\usepackage{amssymb}

\newtheorem{theorem}{Theorem}[section]
\newtheorem{proposition}[theorem]{Proposition}
\newtheorem{lemma}[theorem]{Lemma}
\newtheorem{corollary}[theorem]{Corollary}

\theoremstyle{definition}
\newtheorem{definition}[theorem]{Definition}

\numberwithin{equation}{section}

\begin{document}

\baselineskip=15pt

\title{Generalized holomorphic Cartan geometries}

\author[I. Biswas]{Indranil Biswas}

\address{School of Mathematics, Tata Institute of Fundamental
Research, Homi Bhabha Road, Mumbai 400005, India}

\email{indranil@math.tifr.res.in}

\author[S. Dumitrescu]{Sorin Dumitrescu}

\address{Universit\'e C\^ote d'Azur, CNRS, LJAD, France}

\email{dumitres@unice.fr}

\subjclass[2010]{53B21, 53C56, 53A55}

\keywords{Homogeneous spaces, Cartan geometries, Calabi--Yau manifolds.}

\dedicatory{Dedicated to the memory of Stefan Papadima}

\date{}

\begin{abstract} 
This is largely a survey paper, dealing with Cartan geometries in the complex analytic category. We 
first remind some standard facts going back to the seminal works of F. Klein, E. Cartan and C. 
Ehresmann. Then we present the concept of a {\it branched holomorphic Cartan geometry} which was 
introduced by the authors in \cite{BD}. It generalizes to higher dimension the notion of a 
branched (flat) complex projective structure on a Riemann surface introduced by Mandelbaum.
This new framework is much more flexible than that of the usual holomorphic Cartan geometries 
(e.g. all compact complex projective manifolds admit branched holomorphic projective 
structures). At the same time, this new definition is rigid enough  to enable us to classify 
branched holomorphic Cartan geometries on compact simply connected Calabi-Yau manifolds.
\end{abstract}

\maketitle

\tableofcontents

\section{Introduction}

An important consequence of the uniformization theorem for Riemann surfaces asserts, in particular, that any Riemann surface $M$ 
admits a holomorphic atlas with coordinates in ${\mathbb C}P^1$ and transition maps in $\text{PSL}(2, {\mathbb C})$. In other words, 
any Riemann surface is locally modelled on the complex projective line. This defines a {\it (flat) complex projective structure} on 
$M$. Complex projective structure on Riemann surfaces were introduced in connection with the study of the second order ordinary 
differential equations on complex domains and had a very major role to play in understanding the framework of the uniformization 
theorem \cite{Gu,StG}.

The complex projective line ${\mathbb C}P^1$ acted on by the M\"obius group $\text{PSL}(2, {\mathbb C})$ is a geometry in the 
sense of Klein's Erlangen program in which he proposed to study all geometries in the unifying frame of the homogeneous model spaces $G/H$, 
where $G$ is a finite dimensional Lie group and $H$ a closed subgroup of $G$.

Thus homogeneous spaces could be used as good models to geometrize higher dimensional topological 
manifolds. Indeed, following Ehresmann \cite{Eh}, a manifold $M$ is locally modelled on a 
homogeneous space $G/H$, if $M$ admits an atlas with charts in $G/H$ satisfying the
condition that the transition maps are given 
by elements of $G$ using the left-translation action of $G$ on $G/H$. In this way $M$ is locally 
endowed with the $G/H$-geometry and all $G$-invariant geometrical features of $G/H$ have 
intrinsic meaning on $M$.

Elie Cartan generalized Klein's homogeneous model spaces to {\it Cartan geometries} 
(or {\it Cartan connections}) (see Definition \ref{del} in Section \ref{s2.1}). We recall 
that those are geometrical structures infinitesimally modelled on homogeneous 
spaces $G/H$. A Cartan geometry on a manifold $M$ is equipped with a curvature tensor 
(see Definition \ref{del}) which vanishes exactly when $M$ is locally 
modelled on $G/H$ in the sense of Ehresmann \cite{Eh}. In such a situation the 
Cartan geometry is called {\it flat}.

In \cite{M1, M2} Mandelbaum introduced and studied {\it branched affine and projective structures} on Riemann 
surfaces. Those branched projective structures on Riemann 
surfaces are given by some holomorphic atlas where local charts are finite branched 
coverings on open sets in ${\mathbb C}P^1$ while the transition maps lie in 
$\text{PSL}(2, {\mathbb C})$. 

Inspired by the above mentioned works of Mandelbaum, we defined in \cite{BD} a more general 
notion of {\it branched holomorphic Cartan geometry} on a complex manifold $M$ (see Section 
\ref{s2.2}), which is valid also in higher dimension for non necessarily flat Cartan geometries. 
We define and study here a more general notion of {\it generalized holomorphic Cartan geometry} (see 
Definition \ref{general} and compare with \cite{AM}).

This new notions of generalized (and branched) Cartan geometry are much more flexible than the usual one. For example, all compact 
complex projective manifolds admit a branched flat holomorphic projective structure (see Proposition \ref{algebraic proj struct}). This 
will be obtain as a consequence of the fact that branched Cartan geometries are stable by pull-back through holomorphic ramified maps. 
Moreover, generalized Cartan geometries are stable by pull-back through any holomorphic map.

At the same time this notion is rigid enough to enable one to obtain classification results. We 
prove here the following results (see Corollary \ref{flatness} and 
compare with \cite{BD} for branched Cartan geometry):

{\it Any generalized holomorphic Cartan geometry of type $(G,H)$, with $G$ a complex semi-simple Lie 
group, on a compact simply connected K\"ahler Calabi--Yau manifold is flat.}

This yields to the following corollary:

 {\it Non-projective compact simply connected 
K\"ahler Calabi--Yau manifolds do not admit any branched holomorphic projective 
structure.}

The previous results are consequences of the following Theorem \ref{trivial} ([Theorem 6.2, \cite{BD}]) which is of independent interest:

{\it If $E$ is a holomorphic 
vector bundle over a compact simply connected K\"ahler Calabi--Yau manifold, and $E$ 
admits a holomorphic connection, then $E$ is a trivial holomorphic vector bundle 
equipped with the trivial connection.}

This statement gives a partial answer of a more general (still open) question asked by Atiyah \cite{At}: {\it Let $E$ be a holomorphic 
vector bundle over a compact complex K\"ahler manifold. Assume that $E$ admits a holomorphic connection. Is it true that $E$ also 
admits a flat connection ?}

The question is natural since, under the previous hypothesis, the vector bundle $E$ has trivial rational Chern classes \cite{At}
and hence there are no topological obstructions to admit a flat connection.

An original result of this paper is Proposition \ref{propc} which constructs nontrivial principal elliptic bundles over K3 surfaces bearing holomorphic connections. In particular, those elliptic bundles do not admit flat connections and give a counter-example to
Atiyah's (generalized) question to the context of principal (elliptic) bundles over (simply connected) K\"ahler manifolds.

\section{Klein geometries and $(G,X)$-structures}

Following F. Klein's point of view in his Erlangen's address (1872), a {\it geometry} is a space 
(manifold) $X$ endowed with a transitive action of a (finite dimensional) connected Lie group 
$G$. In this context, two subsets $P_1$ and $P_2$ in $X$, will be considered {\it congruent} 
(also called equivalent) if there exists an element $g \in G$ such that $g \cdot P_1=P_2$.

Let us (arbitrarily) pick-up a point $x_0 \in X$. Then $X$ is diffeomorphic to a quotient $G/H$, where $H$ is the closed subgroup of $G$ stabilizing $x_0$. 

Of course, the first example to think of is that of the $n$-dimensional Euclidean geometry for 
which $G \,=\, \text{O}(n, \mathbb{R}) \ltimes {\mathbb R}^n$ is the 
group of Euclidean motions (it is the semi-direct product of the corresponding orthogonal group $ \text{O}(n, {\mathbb R})$ with 
the translation group for the standard action
of the orthogonal group) and $H \,=\, \text{O}(n, {\mathbb R})$ is the stabilizer of the origin.

In the complex analytic category, it will be suitable to consider instead the {\it complex 
Euclidean space} $(X \,=\,{\mathbb C}^n;\, dz_1^2 + \ldots + dz_n^2)$ for which $G$
is the semi-direct product $\text{O}(n, 
\mathbb{C}) \ltimes {\mathbb C}^n$ for the standard action of
$\text{O}(n, \mathbb{C})$ on ${\mathbb C}^n$ (it is the group of complex 
Euclidean motions) and $H\,=\,\text{O}(n , \mathbb{C})$ (the stabilizer of the origin). The 
non-degenerate complex quadratic form $dz_1^2 + \ldots + dz_n^2$ is a flat holomorphic 
Riemannian metric (see \cite{Gh,D,DZ}).

One also studies classically the complex affine space $X\,=\,{\mathbb C}^n$ under the action of the 
complex affine group $G\,=\,\text{GL}(n,\mathbb C) \ltimes {\mathbb C}^n$
(the semi-direct product for the standard action of $\text{GL}(n,\mathbb C)$ on ${\mathbb C}^n$)
which sends lines 
parametrized at constant speed on (possibly other) lines parametrized at constant speed and the 
complex projective space. Recall that the complex projective space ${\mathbb C}P^n$ acted on by 
the complex projective group $\text{PGL}(n+1, \mathbb C)$ is a geometry where the symmetry group 
preserves (complex) collinearity.
 
In a program which is a vast generalization of Klein's geometries, E. Cartan created the framework of {\it 
Cartan geometries} which are infinitesimal versions of those $G/H$-homogeneous spaces (exactly 
in the same way as Riemannian geometry is the infinitesimal version of the Euclidean space).
Those Cartan geometries which will be presented in Section \ref{s2} admit a curvature tensor 
which will be zero exactly when the Cartan geometry is locally isomorphic to a Klein geometry. 
The Cartan geometries for which the curvature tensor vanishes
are called {\it flat}, and they were nicely described from the global 
geometric point of view by C. Ehresmann in \cite{Eh}.

Ehresmann's definition of those so-called {\it locally homogeneous spaces} (presently the 
terminology {\it $(G,X)$-structure} is more in use) is the following. Let $X$ be a $G$--homogeneous space.
 
\begin{definition} A $(G,X)$-structure on a manifold $M$ is given by an open cover $(U_i)_{i \in I}$ of $M$ with charts
$\phi_i \,:\, U_i \,\longrightarrow\, X$ such that the transition maps
$$\phi_i \circ \phi_j^{-1} \,:\, \phi_j(U_i \cap U_j)\,\longrightarrow\, \phi_i(U_i \cap U_j)$$
are given (on each connected component) by the restriction of an element in $G$.
\end{definition}
 
Two $(G,X)$-structures on $M$ will be identified if there exists a diffeomorphism of $M$ 
isotopic to the identity map that sends the first $(G,X)$-structure to the second one.
 
The key point of the previous definition is the fact that any $G$-invariant geometric feature of 
$X$ will have an intrinsic meaning on $M$.
 
Ehresmann's results in \cite{Eh} led to the following geometric description of this situation:
 
\begin{theorem}[Ehresmann]\label{Ehresmann}
 Let $X\,=\,G/H$ be a Klein geometry and $M$ a manifold of the same dimension as $X$. Then there
is a canonical one-to-one correspondence between the following structures:
\begin{enumerate}
\item A ($G,X)$-structure on $M$.

\item A fiber bundle $E_X$ over $M$ with fiber $X$ and structure group $G$, equipped with
a flat connection together with global section which is transversal to the horizontal
distribution for the connection.

\item A principal $H$-bundle $E_H$ over $M$ and a flat connection on the associated
principal $G$-bundle $E_G\,=\,E_H \times^H G$, such that the subbundle $E_H$ is transverse to
the horizontal distribution for the connection.
\end{enumerate}
\end{theorem}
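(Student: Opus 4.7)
The plan is to establish $(2)\Leftrightarrow(3)$ and $(1)\Leftrightarrow(2)$ separately, then observe that the composite assignments are mutually inverse, modulo the identification of $(G,X)$-structures differing by a diffeomorphism isotopic to the identity. The equivalence $(2)\Leftrightarrow(3)$ is essentially the associated bundle formalism. Given a principal $H$-bundle $E_H$ together with a flat connection on $E_G := E_H\times^H G$, one forms $E_X := E_G/H = E_H\times^H X$, a fibre bundle with fibre $X$ and structure group $G$; the flat connection on $E_G$ descends to $E_X$ because its horizontal distribution is $G$-invariant, and the section $\sigma:M\to E_X$ is obtained as the image of the subbundle $E_H\subset E_G$ under the quotient map $E_G\to E_G/H$. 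Since this quotient intertwines the horizontal distributions on $E_G$ and $E_X$, transversality of $\sigma$ to the horizontal distribution on $E_X$ is the same condition as transversality of $E_H$ to the horizontal distribution on $E_G$. In the reverse direction, $E_H$ is recovered as the preimage of $\sigma(M)$ under $E_G\to E_X$.

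For $(1)\Rightarrow(2)$, starting from an atlas $\{(U_i,\phi_i)\}$ with cocycle $g_{ij}\in G$, I would form $E_X$ by gluing the trivial bundles $U_i\times X$ along $g_{ij}$; each piece carries the product flat connection whose horizontal leaves are $U_i\times\{y\}$, and this distribution patches because $g_{ij}$ is locally constant and acts by bundle automorphisms on the fibre. The local sections $x\mapsto(x,\phi_i(x))$ agree on overlaps precisely because $\phi_i\circ\phi_j^{-1}=g_{ij}$, yielding a global section $\sigma$ transverse to the horizontal leaves since each $\phi_i$ is a local diffeomorphism. For $(2)\Rightarrow(1)$, I would exploit the fact that a flat bundle is locally trivial as a flat bundle: around each point $m\in M$ there exists a connected neighborhood $U_i$ on which $E_X|_{U_i}\cong U_i\times X$ together with the product flat connection. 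The section $\sigma$ then takes the form $x\mapsto(x,\phi_i(x))$ for some smooth $\phi_i:U_i\to X$, and transversality forces $d\phi_i$ to be an isomorphism. After shrinking, $\phi_i$ becomes a chart, and comparing two such flat trivializations on an overlap $U_i\cap U_j$ shows that the transition $\phi_i\circ\phi_j^{-1}$ is, on each connected component, the restriction of an element of $G$.

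The main technical issue is not a single delicate argument but the bookkeeping needed to check that the three transversality conditions really correspond under the constructions above and that the composite assignments are mutually inverse. Two points deserve particular care: first, the local triviality of flat bundles as flat bundles, a foliation/monodromy statement, is the engine that converts the global data in $(2)$ back into the atlas data of $(1)$; second, the equality $\dim M=\dim X$ is used in an essential way, as it is only under this dimension hypothesis that transversality of the section, or equivalently of the $H$-subbundle, can be upgraded to the conclusion that $\phi_i$ is a local diffeomorphism, and hence produces an honest chart for the resulting $(G,X)$-structure.
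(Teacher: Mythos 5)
Your proposal is correct and follows essentially the same route as the paper: the flat cocycle / associated bundle construction $E_X=E_G/H$ with the section coming from the charts (equivalently, the $H$-reduction coming from the section), and flat local trivializations plus the transversality-and-equal-dimension argument to recover local diffeomorphic charts with transitions in $G$. The only difference is organizational (you prove the equivalences pairwise, the paper closes the cycle $(1)\Rightarrow(2)\Rightarrow(3)\Rightarrow(1)$), which does not change the substance.
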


Let us briefly sketch a proof of Ehresmann's theorem. 

\begin{proof}
We will first prove that (1) implies (2). Consider a cover of $M$ by open subsets $U_i$ and local charts $\phi_i \,:\, U_i 
\,\longrightarrow\, X$ with the property that on intersections $U_i \cap U_j$ there exist elements $g_{ij} \in G$ such that $\phi_i \circ 
\phi_j^{-1}\,=\,g_{ij}$. Then $g_{ij}$ is a locally constant cocycle with values in $G$ and hence it defines a flat
principal $G$-bundle $E_G$ over 
$M$. Consider the associated flat bundle $E_X\,=\, E_G/H$, with fiber $X\,=\,G/H$. Then the charts $\phi_i$ are local sections of
$E_X$ which glue together compatibly to produce
a global section of $E_X$. The flat connection of $E_X$ is locally determined by the foliation $U_i \times \{x\}$, for $x 
\,\in\, X$. Since the local charts $\phi_i$ are local diffeomorphisms, the associated section is transverse to this foliation and hence to 
the corresponding flat connection. It may be mentioned that, since $X\,=\,G/H$, a global section of $E_X$ produces
a reduction of structure group of the principal $G$-bundle $E_G$ to the subgroup
$H\, \subset\, G$. Indeed, for any section $\phi'\, :\, M\, \longrightarrow\, E_X$ of the projection
$E_X\,=\, E_G/H\, \longrightarrow\, M$, the inverse image of $\phi'(M)\, \subset\, E_X$ under the quotient map
$E_G\, \longrightarrow\, E_G/H$ is principal $H$--bundle on $M$. Therefore, the section of $E_X\, \longrightarrow\, M$
given by the charts $\phi_i$ produces a  reduction $$E_H\, \subset\, E_G$$ of structure group of the principal
$G$-bundle $E_G$ to the subgroup $H$. Also since the 
previous section of $E_X$ is transverse to the flat structure of $E_X$, we have that $E_H$ is transverse to the flat structure of 
$E_G$. Hence we proved that (1) also implies (3). The same proof also shows that (2) implies (3).

Let us finish the proof by showing that (3) implies (1). Consider a cover of $M$ by simply connected open sets $U_i$. Since the
connection on $E_G$ is flat, over each $U_i$, the principal
$G$-bundle $E_G$ equipped with the flat connection must be isomorphic to the trivial principal
$G$-bundle $U_i \times G\, \longrightarrow\, U_i$ equipped with the trivial connection. Therefore, over each $U_i$, the
reduction of structure group $E_H\vert_{U_i}\, \subset\, E_G\vert_{U_i}$
is defined by a smooth map $$\phi_i \,:\,U_i \,\longrightarrow\, X\,=\,G/H\, .$$
Now the transversality condition implies that $\phi_i$ 
is a local diffeomorphism. On $U_i \cap U_j$ the two maps $\phi_i$ and $\phi_j$ agree up to multiplication by an element $g_{ij} \,\in\, 
G$, because any two local trivializations of a principal
$G$-bundle $E_G$ equipped with a flat connection differ by the left-multiplication (on the trivial
principal $G$-bundle) by an element of $G$. This completes the proof.
\end{proof}

Let $\widetilde{M}\, \longrightarrow\, M$ be a universal cover.
Let us pull-back to $\widetilde{M}$ the flat bundle $E_X$ in description (2) in Theorem \ref{Ehresmann}.
This pull-back is isomorphic to the trivial fiber bundle $\widetilde{M} \times G/H\, \longrightarrow\,
\widetilde{M}$ equipped with the trivial connection. Fix an isomorphism between these two flat fiber bundles.
The section of the fiber bundle $\widetilde{M} \times G/H\, \longrightarrow\, \widetilde{M}$, given by the
pull-back of the section of $E_X\, \longrightarrow\, M$ using the chosen isomorphism of flat bundles,
is called the developing map $dev$ of the $(G,X)$-structure. It is a map from 
$\widetilde{M}$ to $X$, which is unique up to post-composition with an element in $G$, because we had to choose
a trivialization of the flat fiber bundle over $\widetilde{M}$. This developing map is a local 
diffeomorphism because of the transversality condition.

Let $\rho \,:\, \pi_1(M) \,\longrightarrow\, G$ be the monodromy representation for the flat connection on $E_X$
over $M$; since the structure group of $E_X$ is $G$, the monodromy representation is a homomorphism to $G$.
The construction of $\rho$ also involves choosing a trivialization of the pullback of the flat bundle $E_X$ to the universal
cover $\widetilde M$. So the homomorphism $\rho$ is unique up to inner automorphism by an
element of $G$. The fiber bundle $E_X$ is a 
quotient of $\widetilde{M} \times (G/H)$ through the identification $$(p,x)\,=\,(\gamma \cdot p,\, \rho 
(\gamma) \cdot x)\, ,$$ with $\gamma \,\in\, \pi_1(M)$ acting on $\widetilde{M}$ by deck-transformation and 
on $X$ through $\rho$. Since developing map is the pull-back of a section of $E_X$, it is equivariant 
with respect to the monodromy morphism $dev(\gamma \cdot p)\,=\,\rho(\gamma) \cdot dev(p)$.
 
In the description (3) in Theorem \ref{Ehresmann}, one can obtain the developing map and the monodromy homomorphism in the 
following way. Pull-back the flat principal $G$-bundle $E_G$ to the universal cover $\widetilde M$ of 
$M$. Since $\widetilde{M}$ is simply connected this pull-back is isomorphic to the
trivial principal $G$-bundle $\widetilde{M} \times G\,\longrightarrow\, \widetilde{M}$ equipped with the
trivial connection. Once again $E_G$ is isomorphic to a quotient of 
$\widetilde{M} \times G$ through the identification $(p,\,g)\,=\,(\gamma \cdot p,
\,\rho (\gamma) \cdot g)$, 
with $\gamma \in \pi_1(M)$ acting on $\widetilde{M}$ by deck-transformation and on $G$ through the 
monodromy morphism $\rho \,:\, \pi_1(M) \,\longrightarrow\, G$.
 
The pull-back of the subbundle $E_H$ is given by a smooth map from $\widetilde{M}$ to $X\,=\,G/H$ which 
is a local diffeomorphism (by the transversality condition): it is the developing map.
 
\section{Holomorphic Cartan geometries}\label{s2}

It is a very stringent condition for a compact complex manifold to admit a holomorphic Cartan geometry. In order to illustrate this phenomenon, let us remind a theorem of Wang \cite{Wa} which gives the classification
of those complex compact manifolds admitting a holomorphic Cartan geometry with model a complex Lie group $G$ (see the general Definition in Section {\ref{s2.1}).

\subsection{Parallelizable manifolds}

\begin{theorem}[Wang]
Let $M$ be a compact
complex manifold of dimension $m$ and $$\omega \,:\, TM \,\longrightarrow\, {\mathbb C}^m$$ a
holomorphic trivialization of the holomorphic tangent bundle. Then the universal cover of $M$
is biholomorphic to a complex Lie group $L$ and the pull-back of $\omega$ on $L$ coincides with
the Maurer-Cartan form of $L$. Consequently,
$d \omega + \frac{1}{2} \lbrack \omega,\, \omega \rbrack \,=\,0$.
 
Moreover, $M$ is K\"ahler if and only if $L$ is abelian (and $M$ is a complex torus).
\end{theorem}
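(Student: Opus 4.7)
The trivialization $\omega$ provides $m$ holomorphic $1$-forms $\omega_1,\ldots,\omega_m$ forming a global coframe, with dual frame of holomorphic vector fields $X_1,\ldots,X_m$. My first move is to exploit compactness: the structure functions $c_{ij}^k$ defined by $[X_i,X_j] = \sum_k c_{ij}^k X_k$ are global holomorphic functions on the compact complex manifold $M$, hence constant. Therefore $\mathfrak{g} := \mathrm{span}_{\mathbb{C}}(X_1,\ldots,X_m)$ is an abstract $m$-dimensional complex Lie algebra; let $L$ denote the simply connected complex Lie group integrating $\mathfrak{g}$.

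The next step is to promote this infinitesimal picture to a holomorphic $L$-action on $M$. Since $M$ is compact, each holomorphic vector field $X_i$ admits a complex-time flow defined on all of $\mathbb{C}$; combining the local Baker--Campbell--Hausdorff formula with simple-connectedness of $L$, the Lie algebra homomorphism $\mathfrak{g}\hookrightarrow \mathcal{X}_{\mathrm{hol}}(M)$ integrates to a holomorphic action $L\times M\to M$. Because $X_1,\ldots,X_m$ span $T_xM$ at every point, the orbit map $L\to M$, $g\mapsto g\cdot x_0$, is a local biholomorphism; its image is open and, by compactness of $M$ together with disjointness or coincidence of orbits, it is all of $M$. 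The map is thus a covering, so $M=L/\Gamma$ for some discrete subgroup $\Gamma\subset L$, and $\widetilde{M}\cong L$.

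Pulling $\omega$ back to $L$, the dual vector fields are precisely the left-invariant vector fields on $L$ generated by the basis $X_1,\ldots,X_m$ of $\mathfrak{g}$, so the pullback of $\omega$ coincides with the Maurer--Cartan form of $L$. The Maurer--Cartan equation $d\omega+\tfrac{1}{2}[\omega,\omega]=0$ then follows directly from the constancy of the structure constants $c_{ij}^k$ and the fact that it already holds on $L$.

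For the K\"ahler dichotomy, suppose $M$ is compact K\"ahler. Hodge theory implies that every global holomorphic $1$-form on $M$ is $d$-closed, hence $d\omega_i=0$ for all $i$. Reading off the coefficients in the Maurer--Cartan equation and using linear independence of the $2$-forms $\omega_j\wedge\omega_k$, one concludes $c_{ij}^k=0$, so $\mathfrak{g}$ is abelian and $L\cong\mathbb{C}^m$. Then $M=\mathbb{C}^m/\Gamma$ with $\Gamma$ a discrete cocompact subgroup, i.e.\ a lattice of rank $2m$, so $M$ is a complex torus; the converse is classical. The main technical obstacle is the integration step in paragraph two: verifying that the pointwise complex-time flows of the $X_i$ assemble into a \emph{global} holomorphic group action of $L$, rather than just a local one, which is where one genuinely uses both simple-connectedness of $L$ and completeness coming from compactness of $M$.
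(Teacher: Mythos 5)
Your proposal is correct and follows essentially the same route as the paper: constancy of the structure functions by compactness, integration of the resulting Lie algebra to a transitive action of the simply connected group $L$ giving $M \cong L/\Gamma$, the Lie--Cartan formula for the Maurer--Cartan equation, and closedness of holomorphic $1$-forms on a K\"ahler manifold forcing $L$ to be abelian. The only difference is that you spell out the integration step (completeness of the flows, orbit map as a covering) that the paper states in one line.
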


\begin{proof}
Consider $X_{1}, X_{2}, \ldots, X_{m}$ global holomorphic $\omega$-constant vector fields on $M$ 
which span $TM$. Then, for all $1\,\leq\, i,\,j \,\leq\, m$, we have
$$\lbrack X_{i}, \,X_{j} \rbrack \,=\,f_{1}^{ij} X_{1} + f_{2}^{ij} X_{2} +\ldots+ f_{m}^{ij}X_{m}$$ 
with $f_{k}^{ij}$ being holomorphic functions on $M$. Since $M$ is compact, these functions have to be constant and, consequently, 
$X_{1},\, X_{2},\, \cdots, \,X_{m}$ generate a $m$-dimensional Lie algebra $\mathfrak L$. If
we consider on the target vector space of $\omega$ the Lie algebra structure of $\mathfrak L$, then
$\omega$ becomes a Lie algebra isomorphism.

By Lie's theorem there exists a unique connected simply connected complex Lie group $L$ 
corresponding to $\mathfrak L$. The holomorphic parallelization by $\omega$-constant holomorphic 
vector fields is locally isomorphic to the parallelization given by left translation-invariant 
vector fields on the Lie group $L$.

Since $M$ is compact, the $X_{i}$ are complete and they define a holomorphic transitive action 
of $L$ on $M$ (with discrete kernel). Hence $M$ is biholomorphic to a quotient of $L$ by a cocompact discrete subgroup 
$\Gamma$ in $L$.

The Lie-Cartan formula
$$d\omega (X_{i},\,X_{j})\,=\,X_i \cdot \omega (X_j)-X_j \cdot \omega(X_i) -
\omega (\lbrack X_{i},\,X_{j} \rbrack)\,=\,-\omega (\lbrack X_{i},\,X_{j} \rbrack)\,=\,
-\lbrack \omega(X_i), \,\omega (X_j) \rbrack $$
shows that $\omega$ satisfies the Maurer-Cartan equation of the Lie group $L$, which can also be put under the more formal expression $d \omega + \frac{1}{2} \lbrack \omega, \omega \rbrack=0$.

Assume now $M$ is K\"ahler. Then, any holomorphic form on $M$ has to be closed.
The Maurer-Cartan formula shows that the one-forms composing $\omega$ are all closed if and only if $L$ is abelian and thus $M$ is a complex torus.
\end{proof}

\subsection{Holomorphic Cartan geometry}\label{s2.1}

We recall now the definition of a Cartan geometry in the category of complex analytic manifolds. 
Let $G$ be a connected complex Lie group and $H\, \subset\, G$ a connected
complex Lie subgroup. The Lie algebras of $H$ and $G$ will be denoted by
$\mathfrak h$ and $\mathfrak g$ respectively.

The following definition generalizes the standard fibration $G \,\longrightarrow\, G/H$ seen as 
a $H$-(right) principal bundle and equipped with the left-invariant Maurer-Cartan form of $G$ (see also \cite{Sh}).

\begin{definition}\label{del}
A holomorphic Cartan geometry of type $(G,H)$ on a {complex} manifold $M$ is a principal
$H$--bundle $\pi_M\,: E_H\, \longrightarrow\, M$ endowed with 
 a $\mathfrak g$--valued holomorphic $1$--form $\omega$ on $E_H$ satisfying the following
conditions:
\begin{enumerate}
\item $\omega \, :\, TE_H\, \longrightarrow\, E_H\times{\mathfrak g}$ is an
isomorphism.

\item $\omega$ is $H$--equivariant with $H$ acting on $\mathfrak g$ via conjugation.

\item the restriction of $\omega$ to each fiber of $\pi_M$ coincides with the Maurer--Cartan form
associated to the action of $H$ on $E_H$.
\end{enumerate}

The Cartan geometry is called {\it flat} if its curvature $K(\omega)\,=\, d \omega + \frac{1}{2} 
\lbrack \omega, \omega \rbrack$ vanishes.
\end{definition}

Notice that $\omega$-constant holomorphic vector fields on $E_H$ form a holomorphic 
parallelization of the holomorphic tangent bundle of $E_H$. Consider the family of 
$\omega$-constant vector fields on $E_H$ with values in $\mathfrak h$. The form $\omega$
takes the Lie bracket of two such vector fields to the Lie product of the corresponding
elements of $\mathfrak h$. Moreover, $\omega$ is a Lie algebra isomorphism from the family of 
$\omega$-constant vector fields to $\mathfrak g$ if and only if the Cartan geometry defined by 
$\omega$ is flat.

In this case we have the following classical:

\begin{theorem} [Cartan]
The curvature $d \omega + \frac{1}{2} \lbrack \omega, \omega \rbrack$ vanishes if and only if
$(E_H, \,\omega )$ is locally isomorphic to the left-invariant Maurer-Cartan form on $G
\,\longrightarrow\, G/H$.
\end{theorem}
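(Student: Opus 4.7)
The plan is to prove the two directions separately, with the forward implication being essentially a computation and the reverse being the substantive step.

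For the forward direction, the Maurer--Cartan form $\omega_G$ on $G$ is well known to satisfy $d\omega_G + \tfrac{1}{2}[\omega_G,\omega_G]=0$ (this is just the statement of the Maurer--Cartan equation on any Lie group, provable by evaluating on left-invariant vector fields $X,Y$ and using $d\omega_G(X,Y) = -\omega_G([X,Y])$). If $(E_H,\omega)$ is locally isomorphic to $(G,\omega_G)$ as principal $H$--bundles with their Cartan forms, then $\omega$ is locally a pullback of $\omega_G$ by a biholomorphism, and so $K(\omega)=0$.

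For the reverse direction, the idea is to realize the local isomorphism via a Frobenius integrability argument on the product $E_H\times G$. Define the $\mathfrak{g}$--valued holomorphic $1$--form
\[
\Omega \;=\; p_1^{*}\omega - p_2^{*}\omega_G
\]
on $E_H \times G$, where $p_1,p_2$ are the projections. At every point the map $\omega$ (respectively $\omega_G$) is a linear isomorphism of the tangent space onto $\mathfrak{g}$, so $\ker\Omega$ is a holomorphic distribution $D$ of rank $\dim E_H = \dim G$, transverse to both factors. The computation
\[
d\Omega \;=\; -\tfrac{1}{2}p_1^{*}[\omega,\omega] + \tfrac{1}{2}p_2^{*}[\omega_G,\omega_G]
\]
(using the Maurer--Cartan equation for $\omega_G$ and the flatness hypothesis for $\omega$) shows that $d\Omega$ vanishes on any pair of vectors lying in $D$, because on $D$ we have $p_1^{*}\omega = p_2^{*}\omega_G$. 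Therefore $D$ is involutive, and the holomorphic Frobenius theorem produces, through any point $(x_0,g_0)\in E_H\times G$, a maximal integral submanifold $L$. Because $D$ is transverse to the $G$-fibers of $p_1$, the leaf $L$ is locally the graph of a holomorphic map $f\colon U\to G$ defined on an open neighbourhood $U$ of $x_0$; the condition $(v,df(v))\in D$ translates precisely to $f^{*}\omega_G = \omega|_U$. Since $\omega$ is pointwise an isomorphism, so is $df$, whence $f$ is a local biholomorphism.

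It remains to upgrade $f$ to an isomorphism of principal $H$--bundles with Cartan connections, i.e.\ to show that one may arrange $f(x\cdot h) = f(x)\cdot h$ for $h\in H$. For this one picks the initial condition $g_0$ in the fiber $\pi_M^{-1}(\pi_M(x_0))$ and exploits condition~(3) of Definition~\ref{del}: the restriction of $\omega$ to the fiber of $\pi_M$ through $x_0$ is the Maurer--Cartan form of $H$, hence $f$ restricted to this fiber is, by uniqueness of integration of the Maurer--Cartan equation on $H$, the right translation by $g_0$. Combined with the $H$--equivariance property~(2) of $\omega$ and the Ad-equivariance of $\omega_G$ under right translation in $G$, the map $x\mapsto f(x\cdot h)\cdot h^{-1}$ also pulls $\omega_G$ back to $\omega$ and agrees with $f$ on the fiber through $x_0$; uniqueness of the integral leaf through $(x_0,g_0)$ then forces the two to coincide on a neighbourhood. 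The main obstacle is precisely this last equivariance step: the Frobenius argument is routine once set up, but verifying that the integral leaves can be chosen coherently so as to yield a genuine local isomorphism of the $H$--bundle structures (and hence descend to a $(G,G/H)$-structure on $M$) is where a little care is required.
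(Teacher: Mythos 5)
Your proof is correct in substance, but note that the paper itself does not prove this theorem: it is stated as classical (Cartan), with the reader implicitly referred to the literature (e.g.\ Sharpe \cite{Sh}), so there is no in-paper argument to compare with --- what you have written is exactly the standard ``development'' proof. The forward direction is the routine pullback of the Maurer--Cartan equation, and your reverse direction is the classical Frobenius/Darboux argument: the distribution $\ker\bigl(p_1^{*}\omega - p_2^{*}\omega_G\bigr)$ on $E_H\times G$ has rank $\dim\mathfrak g$, is transverse to both factors, is involutive precisely because both forms satisfy the structural equation (on the kernel $p_1^{*}\omega=p_2^{*}\omega_G$, so the two bracket terms cancel), its leaves are graphs of local maps $f$ with $f^{*}\omega_G=\omega$, and conditions (2) and (3) of Definition \ref{del} give local $H$--equivariance via uniqueness of the leaf through a point. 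Two small slips to repair in the write-up: the phrase ``picks the initial condition $g_0$ in the fiber $\pi_M^{-1}(\pi_M(x_0))$'' does not parse, since $g_0\in G$ while that fiber lies in $E_H$ (the natural normalization is $g_0=e$); and the restriction of $f$ to the fiber through $x_0$ is $x_0h\longmapsto g_0h$, i.e.\ $f$ intertwines the right $H$--actions, so calling it ``the right translation by $g_0$'' is misleading. Finally, be explicit that the leaf-uniqueness argument yields $f(xh)=f(x)h$ only for $x$ near $x_0$ and $h$ near $e\in H$; this suffices for the local statement, and since $H$ is connected it can be propagated (for instance by extending $f$ equivariantly off a local section of $E_H$ and re-verifying $f^{*}\omega_G=\omega$ using (2) and (3)), but that extension is exactly the ``little care'' you flag and should be spelled out rather than asserted.
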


A flat Cartan geometry on $M$, gives a $(G,X=G/H)$-structure on $M$ in Ehresmann's sense.

Let
\begin{equation}\label{e2}
E_G\, :=\, E_H\times^H G \, \stackrel{f_G}{\longrightarrow}\, M
\end{equation}
be the holomorphic principal $G$--bundle on $M$ obtained by extending the
structure group of $E_H$ using the inclusion of $H$ in $G$. So, $E_G$ is the
quotient of $E_H\times G$ where two points $(c_1,\, g_1),\, (c_2,\, g_2)\, \in\,
E_H\times G$ are identified if there is an element $h\, \in\, H$ such that
$c_2\,=\, c_1h$ and $g_2\,=\, h^{-1}g_1$. The projection $f_G$ in \eqref{e2} is induced
by the map $E_H\times G\, \longrightarrow\, M$, $(c,\, g)\,\longmapsto\, \pi_M(c)$.
The action of $G$ on $E_G$ is induced by the
action of $G$ on $E_H\times G$ given by the right--translation action of $G$ on itself.
Let ${\rm ad}(E_H)\,=\, E_H\times^H {\mathfrak h}$ and
${\rm ad}(E_G)\,=\, E_G\times^G {\mathfrak g}$ be the adjoint vector bundles for
$E_H$ and $E_G$ respectively. We recall that ${\rm ad}(E_H)$ (respectively,
${\rm ad}(E_G)$) is the quotient of $E_H\times \mathfrak h$ (respectively, $E_G\times
\mathfrak g$) where two points $(z_1,\, v_1)$ and $(z_2,\, v_2)$ are identified if there
is an element $g\, \in\, H$ (respectively, $g\, \in\, G$) such that
$z_2\, =\, z_1g$ and $v_1$ is taken to $v_2$ by the automorphism of the Lie algebra
$\mathfrak h$ (respectively, $\mathfrak g$) given by automorphism of the Lie group
$H$ (respectively, $G)$ defined by $y\,\longmapsto\, g^{-1}yg$.
We have a short exact sequence of holomorphic vector bundles
on $X$
\begin{equation}\label{e3}
0\, \longrightarrow\, {\rm ad}(E_H)\, \stackrel{\iota_1}{\longrightarrow}\, {\rm ad}(E_G)
\, \longrightarrow\, {\rm ad}(E_G)/{\rm ad}(E_H)\, \longrightarrow\,0\, .
\end{equation}
The holomorphic tangent bundle of a complex manifold $Y$ will be denoted by $TY$. Let
$$
{\rm At}(E_H)\,=\, (TE_H)/H\, \longrightarrow\, M \ \text{ and } \ 
{\rm At}(E_G)\,=\, (TE_G)/G\, \longrightarrow\, M
$$
be the Atiyah bundles for $E_H$ and $E_G$ respectively; see \cite{At}. Let
\begin{equation}\label{e4}
0\, \longrightarrow\, {\rm ad}(E_H)\, \stackrel{\iota_2}{\longrightarrow}\, {\rm At}(E_H)
\, \stackrel{q_H}{\longrightarrow}\, TM\, \longrightarrow\,0
\end{equation}
and
\begin{equation}\label{e5}
0\, \longrightarrow\, {\rm ad}(E_G)\, \stackrel{\iota_0}{\longrightarrow}\, {\rm At}(E_G)
\, \stackrel{q_G}{\longrightarrow}\, TM\, \longrightarrow\,0
\end{equation}
be the Atiyah exact sequences for $E_H$ and $E_G$ respectively; see \cite{At}. The
projection $q_H$ (respectively, $q_G$) is induced by the differential of the map
$\pi_M$ (respectively, $f_G$).
A holomorphic connection on a holomorphic principal bundle
is defined to be a holomorphic splitting of the Atiyah exact sequence associated to the
principal bundle \cite{At}. Therefore, a holomorphic connection on $E_G$ is a
holomorphic homomorphism
$$
\psi\, :\, {\rm At}(E_G)\, \longrightarrow\, {\rm ad}(E_G)
$$
such that $\psi\circ \iota_0\,=\, \text{Id}_{{\rm ad}(E_G)}$.

It is straightforward to check that a holomorphic Cartan geometry on $X$ of
type $(G,\, H)$ is a pair $(E_H,\, \theta)$, where
$E_H$ is a holomorphic principal $H$--bundle on $M$ and
$$
\theta\, :\, {\rm At}(E_H)\, \longrightarrow\, {\rm ad}(E_G)
$$
is a holomorphic isomorphism of vector bundles such that
$\theta\circ \iota_2 \,=\,\iota_1$ (see \eqref{e4} and \eqref{e3} for
$\iota_2$ and $\iota_1$ respectively). Therefore, we have
the following commutative diagram
$$
\begin{matrix}
0 & \longrightarrow & {\rm ad}(E_H) & \stackrel{\iota_2}{\longrightarrow} & {\rm At}(E_H)
& \stackrel{q_H}{\longrightarrow} & TM & \longrightarrow & 0\\
&& \Vert && ~\Big\downarrow\theta && ~\Big\downarrow \phi\\
0 & \longrightarrow & {\rm ad}(E_H) & \stackrel{\iota_1}{\longrightarrow} & {\rm ad}(E_G)
& \longrightarrow & {\rm ad}(E_G)/{\rm ad}(E_H) & \longrightarrow & 0
\end{matrix}
$$
and the above homomorphism $\phi$ induced by
$\theta$ is evidently an isomorphism.

The following proposition is classical (see also \cite{Sh}):

\begin{proposition}\label{existence connection}
The $G$-principal bundle $E_G$ admits a holomorphic connection.
\end{proposition}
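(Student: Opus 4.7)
The plan is to construct the required holomorphic splitting $\psi:{\rm At}(E_G)\to {\rm ad}(E_G)$ of $\iota_0$ in \eqref{e5} by leveraging the isomorphism $\theta:{\rm At}(E_H)\xrightarrow{\sim} {\rm ad}(E_G)$ that encodes the Cartan datum, together with the auxiliary holomorphic bundle map $\alpha:{\rm At}(E_H)\to{\rm At}(E_G)$ induced by the $H$--reduction $E_H\hookrightarrow E_G$, $e\mapsto [e,1_G]$. Concretely, $\alpha$ is obtained by differentiating this closed immersion and then taking $H$--quotients, using that the restriction of the $G$--action on $E_G$ to $H\subset G$ coincides with the given $H$--action on $E_H$. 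By construction $\alpha$ is fiberwise injective and fits in a commutative ladder between \eqref{e4} and \eqref{e5}: $\alpha\circ\iota_2=\iota_0\circ\iota_1$ and $q_G\circ\alpha=q_H$.

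The main technical step is to show that ${\rm At}(E_G)$ is the pushout of ${\rm At}(E_H)$ and ${\rm ad}(E_G)$ along ${\rm ad}(E_H)$, i.e.\ that the natural map $\alpha+\iota_0:{\rm At}(E_H)\oplus{\rm ad}(E_G)\to{\rm At}(E_G)$ has image all of ${\rm At}(E_G)$ and kernel equal to the antidiagonal copy of ${\rm ad}(E_H)$ sitting via $(\iota_2,-\iota_1)$. The kernel computation comes from the fact that $q_G\circ\alpha=q_H$, which forces $\alpha({\rm At}(E_H))\cap\iota_0({\rm ad}(E_G))=\iota_0(\iota_1({\rm ad}(E_H)))$. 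Surjectivity is then a rank count: the image has rank at least $(\dim M+\dim H)+\dim G-\dim H=\dim M+\dim G={\rm rk}\,{\rm At}(E_G)$. This piece of bookkeeping is the place where I would expect the main (albeit modest) obstacle to lie, since it is really the whole content of the argument.

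Once the pushout description is in hand, $\psi$ is defined tautologically: prescribe $\psi\circ\alpha:= \theta$ on the ${\rm At}(E_H)$--summand and $\psi\circ\iota_0:={\rm Id}_{{\rm ad}(E_G)}$ on the ${\rm ad}(E_G)$--summand. The compatibility required on the common subbundle ${\rm ad}(E_H)$ reads $\theta\circ\iota_2=\iota_1={\rm Id}\circ\iota_1$, which is exactly the defining relation of the Cartan geometry recorded in the commutative diagram preceding the statement. Thus $\psi$ is a well-defined holomorphic homomorphism, and by its very definition $\psi\circ\iota_0={\rm Id}_{{\rm ad}(E_G)}$, so $\psi$ is a holomorphic connection on $E_G$.

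Equivalently, in more classical terms, one may build the same connection as the global $G$--equivariant $\mathfrak{g}$--valued $1$--form $\widetilde\omega$ on $E_G$ which, on $E_H\times G$, is given by $\mathrm{Ad}(g^{-1})\,\mathrm{pr}_1^{*}\omega+\mathrm{pr}_2^{*}\omega_G^{MC}$ (Maurer--Cartan form on $G$) and descends to $E_G=(E_H\times G)/H$ thanks to the $H$--equivariance of $\omega$. The algebraic construction above is the Atiyah-bundle shadow of this extension and is better adapted to the setup of the paper.
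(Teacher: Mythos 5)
Your argument is correct, but it follows a genuinely different route from the paper's own proof of Proposition \ref{existence connection}. The paper works with the explicit equivariant $1$--form: it defines $\widetilde{\omega}(c,g)\,=\,{\rm Ad}(g^{-1})\pi_1^*(\omega)+\pi_2^*(\omega_G)$ on $E_H\times G$, checks that this form is invariant under the $H$--action and vanishes along the fibers of $E_H\times G\,\longrightarrow\, E_H\times^H G$, so that it descends to $E_G$, where it is a holomorphic connection form; this is exactly the construction you relegate to your final paragraph. Your main argument instead stays entirely inside the Atiyah-sequence formalism: you use the morphism $\alpha:{\rm At}(E_H)\to{\rm At}(E_G)$ coming from the reduction $E_H\subset E_G$, prove that ${\rm At}(E_G)$ is the pushout $({\rm At}(E_H)\oplus{\rm ad}(E_G))/{\rm ad}(E_H)$ (your kernel computation via $q_G\circ\alpha=q_H$ and the rank count are both correct), and then define the splitting $\psi$ by $\psi\circ\alpha=\theta$, $\psi\circ\iota_0={\rm Id}$, well defined precisely because $\theta\circ\iota_2=\iota_1$. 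This is in substance the construction of $\theta'$ that the paper itself carries out later, in Section \ref{s2.2}, for generalized Cartan geometries; your proof only uses the relation $\theta\circ\iota_2=\iota_1$ and never the fact that $\theta$ is an isomorphism, so it applies verbatim to generalized geometries, making the paper's subsequent remark that ``the proof of Proposition \ref{existence connection} still works'' automatic. What the paper's explicit-form proof buys in exchange is the concrete formula for $\widetilde{\omega}$, which is used immediately afterwards (the projections $f_G$ and $h_G$, horizontal lifts, the $1$--jet identifications with $G/H$, and the developing map in the flat case) and which makes the identification of the curvature of the induced connection with $K(\omega)=d\omega+\frac{1}{2}[\omega,\omega]$ transparent.
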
 

\begin{proof} Let us consider on $E_H \times G$ the holomorphic one-form with values in $\mathfrak g$:

$$\widetilde{\omega}(c,g)= Ad(g^{-1})\pi_1^*(\omega) + \pi_2^* (\omega_G)$$

where $\pi_1$ and $\pi_2$ are the projections on the first and the second factor respectively and $\omega_G$ is the left-invariant Maurer-Cartan form on $G$.

One verify that $\widetilde{\omega}$ is invariant by the previous $H$-action and vanishes on the fibers of the fibration $E_H \times G \,\longrightarrow\, E_H \times^H G$.

This implies that $\widetilde{\omega}$ is basic (it is the pull-back of a form on $E_H \times^H G$, also denoted by $\widetilde{\omega})$.

The one-form $\widetilde{\omega}$ is a holomorphic connection on the principal $G$-bundle $E_G$.
\end{proof}

Consider $ f_G\,=\, E_G \,\longrightarrow\, M$ and a second projection map $h_G\,:\, E_G
\,\longrightarrow\, G/H$ 
which associates to each class $(c,\,g) \,\in\, E_G$ the element $g^{-1}H \,\in\, G/H$.

The differentials of the projections $f_G$ and $h_G$ map the horizontal space of the connection 
$\widetilde{\omega}$ at $(c,\,g) \,\in\, E_G$ isomorphically onto $T_{f_G(c)} M$ and onto $T_{g^{-1}H} (G/H)$ 
respectively. This defines an isomorphism between $T_{f_G(c)} M$ and $T_{g^{-1}H} (G/H)$. Hence a 
Cartan geometry provides a family of $1$-jets identifications of the manifold $X$ with the 
model space $G/H$.

Also, the connection $\widetilde{\omega}$ defines a way to lift differentiable curves in $M$ to 
$\widetilde{\omega}$-horizontal curves in $E_G$. Moreover this lift is unique once one specifies the 
starting point of the lifted curve.

When $\widetilde{\omega}$ is flat, its kernel is a foliation. In this case one can lift to $E_G$ not only curves, but also simply connected open sets in $M$ (their lift is tangent to the foliation given by the kernel of $\widetilde{\omega})$. Then their projection $h_G$ to the model $G/H$ is a local biholomorphism. The projections in $G/H$ of two different lifts (in two different leaves of the foliation) are related by the action of an element in $G$ acting on the model $G/H$. This procedure furnishes a $(G,X)$-structure on $M$ (compatible with the complex structure on $M$).

Moreover, if $M$ is simply connected one can lift to $E_G$ all of the manifold $M$. This defines a global section of the principal bundle $E_G$ which must be trivial: $E_G=M \times G$. In this trivialization our section
is $M \times \{e\}$ (with $e$ the identity element in $G$). The restriction of the projection
$h_G \,:
\, E_G \,\longrightarrow\, G/H$ to the section $M \times \{e \}$ defines a local biholomorphism from $M$ to the model $G/H$, which is
a developing map.

If $M$ is not simply connected, one can still pull-back the flat Cartan geometry to the universal cover $\widetilde M$ of $M$ and define in the same way the developing map which is a local biholomorphism from $\widetilde M$ to
the model $G/H$.

\subsection{Generalized holomorphic Cartan geometry}\label{s2.2}

This more general framework (similar to that of \cite{AM}) is given by the following:

\begin{definition} \label{general}
A generalized holomorphic Cartan geometry of type $(G,H)$ on a {complex} manifold $M$ is a principal $H$--bundle $\pi_M\,: E_H\, \longrightarrow\, M$ endowed with 
a $\mathfrak g$--valued holomorphic $1$--form $\omega$ on $E_H$ satisfying:
\begin{enumerate}

\item $\omega$ is $H$--equivariant with $H$ acting on $\mathfrak g$ via conjugation.

\item the restriction of $\omega$ to each fiber of $f$ coincides with the Maurer--Cartan form
associated to the action of $H$ on $E_H$.
\end{enumerate}
\end{definition}

Therefore, generalized holomorphic Cartan geometry of type $(G,H)$ is defined by a commutative diagram
\begin{equation}\label{e6a}
\begin{matrix}
0 & \longrightarrow & {\rm ad}(E_H) & \stackrel{\iota_2}{\longrightarrow} & {\rm At}(E_H)
& \stackrel{q_H}{\longrightarrow} & TM & \longrightarrow & 0\\
&& \Vert && ~\Big\downarrow\theta && ~\Big\downarrow \phi\\
0 & \longrightarrow & {\rm ad}(E_H) & \stackrel{\iota_1}{\longrightarrow} & {\rm ad}(E_G)
& \longrightarrow & {\rm ad}(E_G)/{\rm ad}(E_H) & \longrightarrow & 0
\end{matrix}
\end{equation}
of holomorphic vector bundles on $M$. We note that the homomorphism $\phi$ in \eqref{e6a}
is induced by $\theta$.

Suppose we are given $\theta$ as above.
We can embed $\text{ad}(E_H)$ in ${\rm At}(E_H)\oplus {\rm ad}(E_G)$ by sending any
$v$ to $(\iota_2(v),\, -\iota_1(v))$ (see \eqref{e4}, \eqref{e3}). The Atiyah bundle
${\rm At}(E_G)$ is the quotient $({\rm At}(E_H)\oplus {\rm ad}(E_G))/\text{ad}(E_H)$
for this embedding. The inclusion of $\text{ad}(E_G)$ in ${\rm At}(E_G)$ in
\eqref{e5} is given by the inclusion $\iota_1$ or $\iota_2$ of $\text{ad}(E_G)$ in
${\rm At}(E_H)\oplus {\rm ad}(E_G)$ (note that they give the same homomorphism to
the quotient bundle $({\rm At}(E_H)\oplus {\rm ad}(E_G))/\text{ad}(E_H)$). The homomorphism
$$
{\rm At}(E_H)\oplus {\rm ad}(E_G)\, \longrightarrow\, {\rm ad}(E_G),\, \ \
(v,\, w) \, \longmapsto\, \theta(v)+w
$$
produces a homomorphism
$$
\theta'\, :\, {\rm At}(E_G)\,=\,
({\rm At}(E_H)\oplus {\rm ad}(E_G))/\text{ad}(E_H)\, \longrightarrow\, {\rm ad}(E_G)
$$
which satisfies the condition that $\theta'\circ \iota_0\,=\, \text{Id}_{{\rm
ad}(E_G)}$, meaning $\theta'$ is a holomorphic splitting of \eqref{e5}. Therefore,
$\theta'$ is a holomorphic connection on the principal $G$--bundle $E_G$.

The Cartan geometry is called flat if its curvature $K(\omega)\,=\, d \omega + \frac{1}{2} \lbrack 
\omega, \omega \rbrack$ vanishes. Note that $K(\omega)$ is the curvature of the above connection
$\theta'$, where $\theta$ is the homomorphism corresponding to the Cartan geometry defined by $\omega$.

Notice that the proof of Proposition \ref{existence connection} still works for generalized holomorphic Cartan geometries and endows the principal $G$-bundle $E_G$ with a holomorphic connection. This connection is flat if and only if $K(\omega)=0$.

In this case we get a generalized $(G,X)$-structure on $M$ in the sense of the following definition:

\begin{definition}\label{def generalized Cartan}
A generalized $(G,X)$-structure on a complex manifold $M$ is given by an open cover $(U_i)_{i \in I}$ 
of $M$ with maps $\phi_i \,:\, U_i \,\longrightarrow\, X$ such that for each pair 
$(i,j)$ and for each nontrivial connected component of $U_i \cap U_j$ there exists an element 
$g_{ij} \in G$ such that $\phi_i\,=\,g_{ij} \circ \phi_j$ and $g_{ij}g_{jk}g_{ki}\,=\,1$.
\end{definition}

We note that the cocycle condition $g_{ij}g_{jk}g_{ki}\,=\,1$ is automatically satisfied if 
the element $g_{ij}$ such that $\phi_i=g_{ij} \circ \phi_j$ is unique. It may be clarified 
that in the previous definition $G/H$ and $M$ might have different dimensions.
 
The proof of Theorem \ref{Ehresmann} adapts to this situation and gives the following 
description:
 
\begin{theorem} \label{Ehresmann generalized} 
Let $X\,=\,G/H$ be a Klein geometry and $M$ a manifold. Then there is a canonical
one-to-one correspondence between the following structures.
\begin{enumerate}
\item[(1).] A generalized (G,X)-structure on $M$.

\item[(2).] A fiber bundle $E_X$ over $M$ with structure group $G$ and fiber $X$,
equipped with a flat connection and a global section.

\item[(3).] A principal $H$-bundle $E_H$ over $M$ and a flat
connection on the associated principal $G$-bundle $E_G\,=\,E_H \times^H G$.
\end{enumerate}
\end{theorem}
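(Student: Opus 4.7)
The plan is to adapt the proof of Theorem \ref{Ehresmann} already sketched above, tracking exactly what breaks when the transversality hypothesis is dropped and $\dim M$ is allowed to differ from $\dim X$. Essentially all the bundle-theoretic content goes through verbatim; what requires a little more care is the cocycle condition $g_{ij}g_{jk}g_{ki}=1$ appearing in Definition \ref{def generalized Cartan}, which is no longer automatic from uniqueness of the transition elements.

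For $(1)\Rightarrow(2)$, starting from maps $\phi_i\,:\,U_i\,\longrightarrow\, X$ and elements $g_{ij}\in G$ with $\phi_i=g_{ij}\circ\phi_j$ and $g_{ij}g_{jk}g_{ki}=1$, I would view $\{g_{ij}\}$ as a locally constant $G$-valued \v{C}ech cocycle and use it to build a flat principal $G$-bundle $E_G$, together with the associated flat fiber bundle $E_X\,=\,E_G/H$ with fiber $X\,=\,G/H$. The identities $\phi_i=g_{ij}\circ\phi_j$ now say exactly that the $\phi_i$, regarded as local sections of $E_X$, are compatible on overlaps and therefore glue to a global section of $E_X\,\longrightarrow\, M$. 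The transversality statement from Theorem \ref{Ehresmann} is simply omitted. For $(2)\Leftrightarrow(3)$, I would reuse the paragraph in the earlier proof showing that a global section of $E_X\,=\,E_G/H$ is the same data as a reduction of structure group $E_H\,\subset\, E_G$ to the subgroup $H$, obtained by pulling back the section under the quotient map; flatness of the connection is preserved under this correspondence since the connection lives on $E_G$ in both pictures.

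For $(3)\Rightarrow(1)$, I would take a cover $\{U_i\}$ of $M$ by simply connected opens. Flatness of the connection on $E_G$ gives, over each $U_i$, a trivialization $E_G|_{U_i}\,\cong\, U_i\times G$ carrying the given connection to the trivial one; this trivialization is unique up to left-multiplication by a single element of $G$. Inside this trivialization the reduction $E_H|_{U_i}$ is the graph of a unique holomorphic map $\phi_i\,:\,U_i\,\longrightarrow\, G/H\,=\,X$. On each connected component of an overlap $U_i\cap U_j$, the two flat trivializations differ by a single left-translation $g_{ij}\in G$, whence $\phi_i=g_{ij}\circ\phi_j$, and the cocycle identity $g_{ij}g_{jk}g_{ki}=1$ is then forced by the associativity of composing trivialization changes on triple overlaps. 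The only place where I abandon the original proof of Theorem \ref{Ehresmann} is the claim that $\phi_i$ is a local biholomorphism; here I simply drop it, and the statement no longer requires $\dim M=\dim X$.

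The main obstacle, and essentially the only substantive point, is that in the generalized setting the $g_{ij}$ need no longer be uniquely determined by $\phi_i$ and $\phi_j$ (for instance, if a $\phi_j$ factors through a point, many $g_{ij}$ will do). So I must be careful that the $g_{ij}$ I extract are not chosen ad hoc but are read off canonically from the flat trivializations of $E_G|_{U_i}$; this canonical origin is exactly what makes the cocycle identity on triple overlaps automatic rather than an extra hypothesis that would need separate verification. Once this subtlety is handled the theorem follows, and one checks that the two passages $(3)\to(1)\to(3)$ and $(1)\to(3)\to(1)$ return the original data up to the standard equivalences (choices of trivializations in one direction, refinement of covers in the other).
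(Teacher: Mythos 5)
Your proposal is correct and follows essentially the same route as the paper: both adapt the proof of Theorem \ref{Ehresmann} by dropping the transversality condition, using the locally constant cocycle $\{g_{ij}\}$ to build the flat bundles for $(1)\Rightarrow(2)\Rightarrow(3)$, and reading off the maps $\phi_i$ from the reduction $E_H\subset E_G$ in flat local trivializations for $(3)\Rightarrow(1)$. Your explicit remark that the $g_{ij}$ must be extracted canonically from the flat trivializations (so that the cocycle identity holds automatically, since uniqueness of $g_{ij}$ can fail) is a point the paper leaves implicit, but it is the same argument.
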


\begin{proof}
Let us first prove that (1) implies (2). Consider a cover of $M$ by open subsets $U_i$ and local charts $\phi_i \,:\, U_i 
\,\longrightarrow\, X$ such that on intersections $U_i \cap U_j$ there exist elements $g_{ij}\,\in\, G$ such that $\phi_i \circ 
\phi_j^{-1}\,=\,g_{ij}$ and $g_{ij}g_{jk}g_{ki}\,=\,1$. Then $g_{ij}$ is a locally constant cocycle with values in $G$ and hence it defines a 
flat $G$-bundle $E_G$ over $M$. Consider the associated flat bundle $E_X$, with fiber $X
\,=\,G/H$. Then the maps $\phi_i$ are local 
sections of $E_X$ which glue together in a global section of $E_X$. The flat connection of $E_X$ is locally determined by the 
foliation $U_i \times \{x\}$, for $x \,\in\, X$.

Remark that, since the fiber $X$ is the homogeneous space $G/H$, the existence of a global section of $E_X$ implies that the structural group  of $E_G$ reduces to $H$. Equivalently $E_G$ admits a $H$-principal subbundle $E_H$. Hence we proved that (1) also implies (3). The same proof shows also that (2) implies (3).

Let us finish the proof by showing that (3) implies (1). Consider a cover of $M$ by simply connected open sets $U_i$. Since $E_G$ is flat it must be trivial, isomorphic to
$U_i \times G$,  over all simply connected open sets $U_i$. In restriction to each $U_i$ the subbundle $E_H$ is defined by a map $\phi_i \,:\,U_i \,\longrightarrow\, X\,=\,G/H$. On $U_i \cap U_j$ the two maps $\phi_i$ and $\phi_j$ agree up to multiplication by the element $g_{ij} \in G$.
This achieves the proof.
\end{proof}

Moreover, one gets a developing map of a generalized $(G,X)$-structure on $M$. Indeed, if $M$ is simply connected the flat bundle $E_X$ in (2) of previous theorem is trivial 
and the global section is a map from $M$ to $G/H$.
 
In general, one can consider the pull-back of the generalized $(G,X)$-structure to the universal cover $\tilde M$ of $M$ and the previous global section is the developing map $dev$ of the generalized $(G,X)$-geometry. Hence for holomorphic generalized Cartan geometries, this is a holomorphic map $dev$ from the universal cover $\widetilde{M}$ to $X$. Notice that the rank of the differential of the developing map coincides with the rank of $\omega$.

If $\omega$ is an isomorphism on an open dense set of $E_H$ we call the corresponding 
generalized Cartan geometry: {\it branched} Cartan geometry. In this case the Klein geometry $G/H$ and $M$ must have the same dimension. For a
flat branched Cartan geometry, the maps $\phi_i$ in 
Definition \ref{def generalized Cartan} are branched bi-holomorphic maps (i.e.  their differential is an 
isomorphism at the generic point) and elements $g_{ij}$ are unique. In this case the global section in point (2) of Theorem 
\ref{Ehresmann generalized} is transverse to the flat connection at the generic point (i.e. 
over an open dense set in $M$). Equivalently, in point (3) of Theorem \ref{Ehresmann 
generalized} the $H$-subbundle $E_H$ of $E_G$ is transverse to the flat connection at the 
generic point.

We note that a branched holomorphic Cartan geometry is defined by a commutative diagram
\begin{equation}\label{e6}
\begin{matrix}
0 & \longrightarrow & {\rm ad}(E_H) & \stackrel{\iota_2}{\longrightarrow} & {\rm At}(E_H)
& \stackrel{q_H}{\longrightarrow} & TM & \longrightarrow & 0\\
&& \Vert && ~\Big\downarrow\theta && ~\Big\downarrow \phi\\
0 & \longrightarrow & {\rm ad}(E_H) & \stackrel{\iota_1}{\longrightarrow} & {\rm ad}(E_G)
& \longrightarrow & {\rm ad}(E_G)/{\rm ad}(E_H) & \longrightarrow & 0
\end{matrix}
\end{equation}
of holomorphic vector bundles on $M$, such that
$\theta$ is an isomorphism over a nonempty open subset of $M$;
the homomorphism $\phi$ in \eqref{e6} is induced by $\theta$.

\subsection{Branched  affine and projective structures}

A {\it holomorphic affine structure} (or equivalently {\it holomorphic affine connection}) on a 
complex manifold $M$ of dimension $m$ is a holomorphic Cartan geometry of type $({\mathbb C}^m\rtimes\text{GL}(m, {\mathbb C}) 
, \text{GL}(m, {\mathbb C}))$ (see also \cite{MM,Sh}).
A branched holomorphic Cartan geometry of type $({\mathbb C}^m\rtimes\text{GL}(m, {\mathbb C}), 
\text{GL}(m, {\mathbb C}))$ will be called a {\it branched holomorphic affine structure} or a 
{\it branched holomorphic affine connection}.

We also recall that a {\it holomorphic projective structure} (or a {\it holomorphic projective connection}) on a complex manifold $M$ of dimension $m$ is a 
holomorphic Cartan geometry of type $(\text{PGL}(m+1,{\mathbb C}), Q)$, where $Q\, \subset\, 
\text{PGL}(m+1,{\mathbb C})$ is the maximal parabolic subgroup that fixes a given point for the 
standard action of $\text{PGL}(m+1,{\mathbb C})$ on ${\mathbb C}P^m$ (the space of lines in 
${\mathbb C}^{m+1}$). In particular, there is a standard holomorphic projective structure on 
$\text{PGL}(m+1,{\mathbb C})/Q\,=\, {\mathbb C}P^m$ (see also \cite{Sh, OT}).

We will call a branched holomorphic Cartan 
geometry of type $(\text{PGL}(m+1,{\mathbb C}), Q)$ a {\it branched holomorphic projective structure} or a {\it branched holomorphic projective connection.}

\begin{proposition} \label {algebraic proj struct}
Every compact complex projective manifold $M$ admits a branched flat holomorphic
projective structure.
\end{proposition}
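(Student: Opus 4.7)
The strategy is to exhibit a generically étale holomorphic map $f\,:\,M\,\longrightarrow\, \mathbb{C}P^m$ and then pull back along $f$ the standard flat holomorphic projective structure on $\mathbb{C}P^m\,=\,\text{PGL}(m+1,\mathbb{C})/Q$. This uses in an essential way the remark made in the introduction that branched holomorphic Cartan geometries are stable under pull-back by holomorphic ramified maps.

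First I would fix a holomorphic embedding $\iota\,:\,M\,\hookrightarrow\,\mathbb{C}P^N$ coming from the projective hypothesis, and then produce a generically finite morphism $M\,\longrightarrow\,\mathbb{C}P^m$ by a generic linear projection. Concretely, pick a linear subspace $L\,\subset\,\mathbb{C}P^N$ of codimension $m+1$ that is disjoint from $\iota(M)$; such an $L$ exists by a dimension count, since the locus of subspaces meeting $\iota(M)$ is a proper subvariety of the relevant Grassmannian. The linear projection with center $L$ defines a morphism $\pi_L\,:\,\mathbb{C}P^N\setminus L\,\longrightarrow\,\mathbb{C}P^m$, and $f\,:=\,\pi_L\circ\iota$ is a finite surjective holomorphic map. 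Because we are in characteristic zero and both $M$ and $\mathbb{C}P^m$ are smooth of the same dimension $m$, generic smoothness implies that the differential $df$ is an isomorphism on a nonempty Zariski open subset $U\,\subset\,M$.

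Next I would pull back the standard flat holomorphic projective structure on $\mathbb{C}P^m$ via $f$. If $(E_Q\,\longrightarrow\,\mathbb{C}P^m,\,\omega_0)$ denotes the tautological principal $Q$-bundle $\text{PGL}(m+1,\mathbb{C})\,\longrightarrow\,\mathbb{C}P^m$ equipped with its Maurer--Cartan form, form $E_H\,:=\,f^*E_Q$ and $\omega\,:=\, \widetilde{f}^*\omega_0$, where $\widetilde{f}\,:\,E_H\,\longrightarrow\, E_Q$ is the natural lift. By naturality, $\omega$ is $Q$-equivariant and restricts to the Maurer--Cartan form on each fiber of $E_H\,\longrightarrow\, M$, so $(E_H,\,\omega)$ is a generalized holomorphic Cartan geometry of type $(\text{PGL}(m+1,\mathbb{C}),\,Q)$ in the sense of Definition \ref{general}. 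Over the étale open set $U$, the map $f$ is a local biholomorphism, so the homomorphism $\theta$ in the diagram \eqref{e6} is an isomorphism over $U$; this is exactly the condition that upgrades a generalized Cartan geometry to a branched one. Flatness persists because $f^*K(\omega_0)\,=\,K(\omega)$ and $K(\omega_0)\,=\,0$ for the standard model.

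The main point requiring care, and thus the principal obstacle, is the construction of $f$ as a \emph{generically étale} morphism (not merely a finite one). Finiteness alone would be insufficient, since the branched Cartan geometry definition demands that $\theta$ be an isomorphism on a nonempty open subset of $M$, which forces the differential of the developing-type map to have maximal rank generically. Once this is secured by the generic projection argument above together with generic smoothness, the remaining verifications (equivariance, fiberwise Maurer--Cartan condition, vanishing of curvature) are formal consequences of pull-back, and the proposition follows.
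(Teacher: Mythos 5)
Your proposal is correct and takes essentially the same route as the paper: produce a finite surjective holomorphic map $M \,\longrightarrow\, {\mathbb C}P^m$ (the paper gets it by repeatedly projecting from a point off the image of an embedding, you by a single generic linear projection from a codimension-$(m+1)$ center disjoint from $M$ --- the same classical argument) and pull back the standard flat holomorphic projective structure on ${\mathbb C}P^m$. Your added care about generic \'etaleness and about why the pulled-back geometry is a \emph{branched} (not merely generalized) flat Cartan geometry only makes explicit what the paper leaves implicit in its appeal to stability of branched geometries under ramified pull-back.
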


\begin{proof}
Let $M$ be a compact complex projective manifold of complex dimension $m$. Then there exists a finite
surjective algebraic, hence holomorphic, morphism
$$
\gamma\, :\, M\, \longrightarrow\, {\mathbb C}P^m\, . 
$$

Indeed, one proves that the smallest integer $N$ for which there exists a finite 
morphism $f$ from $M$ to ${\mathbb C}P^N$ is $m$. If $N \,>\,m$, then there exists $P 
\,\in\, {\mathbb C}P^N \setminus f(M)$; now consider the projection $\pi \,:\, {\mathbb 
C}P^N \setminus \{P \} \,\longrightarrow\, {\mathbb C}P^{N-1}$. The fibers of $\pi \circ f$ must be 
finite (otherwise $f(M)$ would contain a line through $P$, hence $P$). Since $\pi 
\circ f$ is a proper morphism with finite fibers, it must be finite.

Now we can pull back the standard holomorphic projective structure on ${\mathbb C}P^m$
using the above map $\gamma$ to get a branched holomorphic projective structure on $M$.
\end{proof}

\begin{proposition}\label{simply connected}\mbox{}
\begin{enumerate}
\item[(i)] Simply connected compact complex manifolds do not admit any branched flat
holomorphic affine structure.

\item[(ii)] Simply connected compact complex manifolds admitting a branched flat holomorphic
projective structure are Moishezon.
\end{enumerate}
\end{proposition}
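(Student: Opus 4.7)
The plan for both parts is to exploit the developing map furnished by Theorem \ref{Ehresmann generalized}. A branched flat holomorphic Cartan geometry on $M$ of type $(G,H)$ yields a flat principal $G$--bundle $E_G$ together with the reduction $E_H$, and hence a developing map $dev\,:\,\widetilde{M}\,\longrightarrow\,G/H$. Since $M$ is simply connected, $\widetilde{M}\,=\,M$, so $dev$ is a globally defined holomorphic map from the compact complex manifold $M$ to $G/H$. The branched hypothesis that $\omega$ is an isomorphism on an open dense subset of $E_H$ translates, as explicitly recorded in the excerpt, into the assertion that the differential of $dev$ has maximal rank $m\,=\,\dim_{\mathbb{C}} M\,=\,\dim_{\mathbb{C}}G/H$ on an open dense subset of $M$.

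For part (i), the model is $G/H\,=\,\mathbb{C}^m$, so each component of $dev$ is a globally defined holomorphic function on the compact complex manifold $M$, and the maximum modulus principle forces each such function to be constant. Hence $dev$ is constant, its differential vanishes identically, and this contradicts the generic maximal rank condition. Therefore no compact simply connected complex manifold can carry a branched flat holomorphic affine structure.

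For part (ii), the model is $G/H\,=\,\mathbb{C}P^m$, and $dev\,:\, M\,\longrightarrow\,\mathbb{C}P^m$ is a holomorphic map between compact connected complex manifolds of the same dimension $m$, which is a local biholomorphism at a generic point. Its image is therefore nonempty, open (by the inverse function theorem at a smooth point), and closed (by compactness of $M$), so by connectedness of $\mathbb{C}P^m$ we conclude $dev(M)\,=\,\mathbb{C}P^m$; matching of dimensions then forces $dev$ to be generically finite. Pulling back any $m$ algebraically independent rational functions on $\mathbb{C}P^m$ via the dominant holomorphic map $dev$ yields $m$ algebraically independent meromorphic functions on $M$, so the algebraic dimension of $M$ equals $\dim_{\mathbb{C}} M$; by Moishezon's theorem, $M$ is Moishezon. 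The only mildly delicate ingredient is the fact that the pulled-back meromorphic functions remain algebraically independent; this is a standard consequence of $dev$ being dominant, which in turn crucially uses the branched hypothesis to guarantee the generic rank equality.
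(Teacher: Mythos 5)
Your proposal is correct and follows essentially the same route as the paper: in both parts one uses the developing map of the flat branched geometry on the simply connected $M$, concluding in (i) that a nonconstant holomorphic map to $\mathbb{C}^m$ from a compact manifold is impossible, and in (ii) that generic local biholomorphy onto $\mathbb{C}P^m$ forces algebraic dimension $m$, hence Moishezon; you merely spell out the pullback of rational functions that the paper leaves implicit. One tiny imprecision: the inverse function theorem only shows $dev(M)$ \emph{contains} an open set (not that it is open), but dominance is all your algebraic-independence argument needs, so the proof stands.
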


\begin{proof} 
(i) If, by contradiction, a simply connected compact complex manifold $M$ admits a branched flat 
holomorphic affine structure, then the developing map $dev \,:\, M \,\longrightarrow\, 
{\mathbb C}^m$ is holomorphic and nonconstant, which is a contradiction.

(ii) If $M$ is a simply connected manifold of complex dimension $m$ admitting a branched flat
holomorphic projective structure, then its developing map is a holomorphic map $dev \,:\, M \,
\longrightarrow\, {\mathbb C}P^m$ which is a local biholomorphism away from a divisor $D$ in
$M$. Thus, the algebraic dimension of $M$ must be $m$; consequently, $M$ is Moishezon (e.g. meromorphic functions on $M$ separate points in general position) \cite{Mo}.
\end{proof}

Since any given compact K\"ahler manifold is Moishezon   if and only if it is projective \cite{Mo}, one gets
the following:

\begin{corollary}\label{corollary simply connected}
A non-projective simply connected compact K\"ahler manifold does
not admit any branched flat holomorphic
projective structure.
\end{corollary}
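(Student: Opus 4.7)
The plan is to argue by contradiction, combining Proposition \ref{simply connected}(ii) with the cited Moishezon criterion for K\"ahler manifolds. So I would suppose that $M$ is a simply connected compact K\"ahler manifold which is not projective, and assume for contradiction that $M$ carries a branched flat holomorphic projective structure.

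First I would invoke Proposition \ref{simply connected}(ii): since $M$ is simply connected, compact, and equipped with a branched flat holomorphic projective structure, the developing map $dev\,:\,M\,\longrightarrow\,\mathbb{C}P^m$ (where $m\,=\,\dim_{\mathbb C}M$) is a globally defined holomorphic map which is a local biholomorphism away from a proper analytic subset. This forces the algebraic dimension of $M$ to be maximal, and the proposition concludes that $M$ is Moishezon.

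Next I would apply the classical theorem of Moishezon stating that a compact K\"ahler manifold is Moishezon if and only if it is projective (this is the fact \cite{Mo} quoted immediately before the corollary in the excerpt). Combining this with the previous step, $M$ would have to be projective, contradicting the assumption that $M$ is non-projective. Hence no such branched flat holomorphic projective structure can exist on $M$.

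There is essentially no obstacle here: the corollary is a direct consequence of Proposition \ref{simply connected}(ii) together with the Moishezon-equals-projective dichotomy in the K\"ahler category, and the two statements compose into a one-line contrapositive argument. The only substantive content has already been established in Proposition \ref{simply connected}, where the existence of a nonconstant holomorphic developing map to $\mathbb{C}P^m$ was used to force the algebraic dimension to equal $m$.
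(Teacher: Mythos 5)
Your argument is correct and is exactly the paper's own reasoning: the corollary is stated there as an immediate consequence of Proposition \ref{simply connected}(ii) together with Moishezon's theorem that a compact K\"ahler manifold is Moishezon if and only if it is projective. Nothing further is needed.
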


In particular, non-projective $K3$ surfaces do not admit any branched flat holomorphic projective 
structure.

Notice that the classification of compact complex surfaces bearing branched holomorphic affine connections and branched holomorphic projective connections is not known. In particular, we do not know if every compact complex surface bearing a branched holomorphic affine connection (respectively, a branched holomorphic projective connection)
also admits a branched flat holomorphic affine connection (respectively, a branched flat holomorphic projective connection). Recall that in the classical (unbranched) setting, the corresponding questions were settled and answered in a positive way in \cite{IKO,KO,KO2}.

Also we do not know the classification of compact complex surfaces bearing {\it branched holomorphic Riemannian metrics}.

 Recall that a branched holomorphic Riemannian metric on a complex manifold of dimension $m$ is a branched Cartan geometry with model $(G, H)$, where $H\,=\, \text{O}(m, {\mathbb C})$ and $G\,=\, {\mathbb 
C}^m\rtimes\text{O}(m, {\mathbb C})$.

A natural related question is to address the corresponding Cartan equivalence problem in the branched case: among the holomorphic sections of the bundle of complex quadratic forms  $S^2(T^*M)$ which are nondegenerate over an open dense set in $M$, characterize those who define a branched holomorphic Riemannian metric on (entire)  $M$. A more general question is to characterize holomorphic symmetric differentials (see the study in \cite{BO, BO1, BO2}) which are given by branched holomorphic Cartan geometries.

\section{Holomorphic Connections and Calabi-Yau manifolds}

Recall that K\"ahler Calabi--Yau manifolds are compact complex K\"ahler manifolds 
with the property that the first Chern class (with real coefficients) of the 
holomorphic tangent bundle vanishes. By Yau's theorem proving Calabi's conjecture, 
those manifolds admit K\"ahler metrics with vanishing Ricci curvature \cite{Ya}. 
Compact K\"ahler manifolds admitting a holomorphic affine connection have vanishing 
real Chern classes \cite{At}; it was proved in \cite{IKO} using Yau's result that 
they must admit finite unramified coverings which are complex tori.

The aim of this section is to give a simple proof of the following result of \cite{BD} in the case of the {\it projective} simply connected  Calabi-Yau manifolds and to deduce some consequences about generalized Cartan geometries on those manifolds (Corollary \ref{flatness}).

\begin{theorem} [\cite{BD}] \label{trivial} Let $M$ be a compact simply connected K\"ahler Calabi-Yau manifold and
$E$ a holomorphic vector bundle over $M$. If $E$ is equipped with a holomorphic connection
$\nabla$, then $E$ is trivial and the connection $\nabla$ coincides with the trivialization.
\end{theorem}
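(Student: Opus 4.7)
The plan is to combine Atiyah's vanishing of Chern classes with the Kobayashi--Hitchin correspondence applied to the Ricci-flat K\"ahler metric produced by Yau's theorem. First, Atiyah's theorem applied to $(E, \nabla)$ yields the vanishing of all rational Chern classes of $E$, so in particular $c_1(E) = 0$ and $c_2(E) = 0$ in real cohomology. Second, Yau's solution of the Calabi conjecture equips $M$ with a Ricci-flat K\"ahler metric $\omega$; since $c_1(TM) = 0$, the cotangent bundle $\Omega^1_M$ becomes Hermite--Einstein, hence polystable of slope zero with respect to $\omega$.

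The main step is to prove that $E$ is $\omega$-semistable. I would argue this via the Harder--Narasimhan filtration $0 \subsetneq E_1 \subsetneq \cdots \subsetneq E_k = E$: the maximally destabilizing subsheaf $E_1$ has slope $\mu(E_1) > \mu(E) = 0$. The second fundamental form of $E_1$ with respect to $\nabla$ is an $\mathcal{O}_M$-linear map $E_1 \to (E/E_1) \otimes \Omega^1_M$. Since tensoring with the polystable slope-zero bundle $\Omega^1_M$ preserves the maximal HN slope, one has $\mu_{\max}((E/E_1) \otimes \Omega^1_M) = \mu_{\max}(E/E_1) < \mu(E_1)$, so the standard vanishing of Hom between semistable sheaves of decreasing slopes forces the second fundamental form to vanish. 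Hence $\nabla$ preserves $E_1$ and induces a holomorphic connection on it; a second application of Atiyah then forces $c_1(E_1) = 0$, contradicting $\mu(E_1) > 0$ unless $k = 1$. Thus $E$ is $\omega$-semistable.

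With $E$ semistable and $c_1(E) = c_2(E) = 0$, the equality case of the Bogomolov--L\"ubke inequality (Simpson, Bando--Siu) upgrades $E$ to a polystable projectively flat Hermite--Einstein bundle; the simple-connectedness of $M$ (which gives $H^1(M,\mathcal{O}_M)=0$ and trivializes $\det E$) then makes $E$ itself flat. Since $M$ is simply connected, the flat holomorphic bundle $E$ is trivial: $E \cong \mathcal{O}_M^{\oplus r}$. Finally, on this trivial bundle $\nabla$ takes the form $d + A$ with $A \in \mathrm{Mat}_r(\mathbb{C}) \otimes H^0(M, \Omega^1_M)$; since $b_1(M) = 0$ for the simply connected K\"ahler manifold $M$ we have $H^0(M, \Omega^1_M) = 0$, so $A = 0$ and $\nabla$ is the trivial connection. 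The hard part will be the Harder--Narasimhan step, where the Calabi--Yau hypothesis $c_1(\Omega^1_M) = 0$ is crucial for the slope comparison and the slightly delicate bookkeeping about reflexivity of $E_1$ must be handled (using that $\mathrm{codim}\,\mathrm{Sing}(E_1)\geq 2$ so that Atiyah's theorem can be applied to $\det E_1$).
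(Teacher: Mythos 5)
Your skeleton is essentially the paper's: (a) semistability of $E$ is proved exactly as in the lemma from \cite{Bi} reproduced in the paper, via the maximal destabilizing subsheaf, its second fundamental form, and the semistability of $TM$ supplied by Yau's Ricci-flat metric (both you and the paper implicitly use that tensor products of semistable sheaves are semistable to bound $\mu_{\max}$ of $(E/E_1)\otimes\Omega^1_M$); (b) a Simpson-type statement for semistable bundles with vanishing Chern classes; (c) simple connectedness to trivialize the flat object, and $H^0(M,\Omega^1_M)=0$ to force $\nabla=d$. One genuine difference: to kill the destabilizing subsheaf $E_1$ you apply Atiyah to $\det E_1$ off the codimension-two singular locus, whereas the paper proves (Lemma \ref{nabla}) that a $\nabla$-invariant coherent subsheaf of a bundle with a holomorphic connection is automatically a subbundle, and then quotes degree zero. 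Your variant is workable, but make the missing step explicit: you need injectivity of $H^1(M,\Omega^1_M)\to H^1(M\setminus Z,\Omega^1_M)$ for $\mathrm{codim}\,Z\ge 2$ to conclude that the Atiyah class of $\det E_1$ on all of $M$ vanishes before invoking \cite{At}.

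The one step I would flag as a real imprecision is the assertion that semistability together with $c_1=0$ and $c_2=0$ ``upgrades $E$ to a polystable projectively flat Hermite--Einstein bundle.'' The equality case of the Bogomolov--L\"ubke inequality characterizes projective flatness for bundles that already carry a Hermite--Einstein metric, i.e.\ polystable ones; a merely semistable bundle with vanishing Chern character terms is in general only a successive extension of stable flat bundles. That filtration statement is precisely the content of Simpson's Corollary~3.10 in \cite{Si}, which is what the paper uses. Your argument is rescued by the hypotheses you invoke right afterwards: $\pi_1(M)=1$ forces every stable flat graded piece to be $\mathcal{O}_M$, and $H^1(M,\mathcal{O}_M)=0$ splits the resulting filtration, giving $E\cong\mathcal{O}_M^{\oplus r}$ directly --- phrase it this way rather than through polystability. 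Finally, be aware of scope: Simpson's result is stated for projective $M$ with a rational K\"ahler class, and the paper's ``simple proof'' deliberately restricts to that projective case, referring to \cite{BD} for the general K\"ahler statement; your appeal to Bando--Siu to cover arbitrary compact K\"ahler Calabi--Yau manifolds is an additional claim that would need a precise citation of a K\"ahler version of the semistable-with-vanishing-Chern-classes correspondence, and is not what is done in this paper.
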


\begin{corollary} \label{principal trivial}
Let $P$ be a holomorphic $G$-principal bundle over a compact simply connected K\"ahler Calabi-Yau manifold $M$. Assume that the complex Lie group $G$ is semi-simple or simply connected.
Then, if $P$ admits a holomorphic connection, then $P$ is trivial.
\end{corollary}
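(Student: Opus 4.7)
My plan is to apply Theorem~\ref{trivial} to a well-chosen vector bundle associated to $P$ in order to force the holomorphic connection on $P$ to be flat; triviality of $P$ will then follow from the simple-connectedness of $M$.

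The first step is to produce a holomorphic representation $\rho \colon G \longrightarrow \mathrm{GL}(V)$ whose differential $\rho_{*} \colon \mathfrak{g} \longrightarrow \mathfrak{gl}(V)$ is injective. When $G$ is semi-simple, the adjoint representation $\mathrm{Ad} \colon G \longrightarrow \mathrm{GL}(\mathfrak{g})$ does the job, since the centre of a semi-simple complex Lie algebra vanishes and hence $\mathrm{ad} \colon \mathfrak{g} \longrightarrow \mathrm{End}(\mathfrak{g})$ is injective. When $G$ is simply connected, Ado's theorem supplies a faithful representation of $\mathfrak{g}$, which integrates to a holomorphic representation of $G$ precisely because $\pi_{1}(G)\,=\,0$. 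This is the step where the hypothesis on $G$ is genuinely used, and I expect it to be the main obstacle: for an arbitrary complex Lie group one cannot in general produce a finite-dimensional holomorphic representation with injective differential, and the strategy below would then break down.

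Next, I would form the associated holomorphic vector bundle $E\,=\,P\times^{G} V$ endowed with the connection $\nabla$ induced from the given holomorphic connection on $P$. Theorem~\ref{trivial} then gives that $E$ is holomorphically trivial and $\nabla$ is the trivial connection, so in particular the curvature of $\nabla$ vanishes. Since this curvature is obtained by applying $\rho_{*}$ fibrewise to the curvature $K \in H^{0}(M, \Omega^{2}_{M} \otimes \mathrm{ad}(P))$ of the connection on $P$, the injectivity of $\rho_{*}$ forces $K\,=\,0$; in other words the holomorphic connection on $P$ itself is flat.

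To finish, a flat holomorphic connection on the principal $G$-bundle $P\longrightarrow M$ over the simply connected base $M$ produces a holomorphic section: parallel transport of a chosen point of $P$ along paths in $M$ is path-independent because $\pi_{1}(M)\,=\,0$, yielding a global holomorphic section of $P$ (equivalently, each leaf of the horizontal holomorphic foliation on $P$ is mapped biholomorphically onto $M$ by the bundle projection). A holomorphic section of a holomorphic principal bundle is equivalent to a holomorphic trivialization, so $P$ is trivial as claimed.
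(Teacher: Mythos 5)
Your proof is correct and follows essentially the same route as the paper: choose a holomorphic representation of $G$ with injective differential (Ado's theorem in the simply connected case, an explicit choice such as the adjoint representation in the semi-simple case, where the paper instead cites Hochschild), apply Theorem~\ref{trivial} to the associated vector bundle to conclude flatness of the connection on $P$, and use $\pi_1(M)=0$ to trivialize the flat principal bundle. The only differences are cosmetic: your explicit use of $\mathrm{Ad}$ and the spelled-out parallel-transport argument for triviality, both of which the paper leaves implicit.
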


Let us first show that Theorem \ref{trivial} implies the Corollary.

\begin{proof}
Let $\rho \,:\, G \,\longrightarrow\, {\rm GL}(N, {\mathbb C})$ be a linear representation of $G$ with discrete kernel. The corresponding Lie algebra representation $\rho'$ is an injective Lie algebra homomorphism
from $\mathfrak g$ to $\mathfrak{ gl}(N, {\mathbb C}). $ For $G$ simply connected those representations exist by Ado's theorem. For $G$ complex 
semi-simple those representations do also exist (see Theorem 3.2, chapter XVII in \cite{Ho}).

Consider the associated holomorphic vector bundle $P_{\rho}$ with fiber type ${\mathbb C}^N$. Then $P_{\rho}$ inherits a holomorphic connection $\nabla_{\rho}$ and, by Theorem \ref{trivial}, this connection
$\nabla_{\rho}$ must be flat. Since the curvature of $\nabla_{\rho}$ is the image of the curvature of $\nabla$ through $\rho'$ and $\rho'$ is injective, it follows that $\nabla$ is also flat. Since $M$ is simply-connected
the flat principal bundle $P$ has to be trivial.
\end{proof} 

In order to see that the hypothesis on $G$ is necessary, let us prove the following proposition inspired by a construction of non integrable holomorphic one-forms in \cite{Br}.

\begin{proposition}\label{propc}
There exists a nontrivial holomorphic principal elliptic bundle over a projective $K3$ surface which admits a non-flat
holomorphic connection, but does not admit any flat holomorphic connection.
\end{proposition}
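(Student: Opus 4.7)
The strategy is to exploit Hodge theory on a projective K3 surface $S$, choosing the elliptic curve $E = \mathbb{C}/\Lambda$ to match two integral transcendental classes. Fix $S$ with Picard rank $\rho \leq 20$ (so that the transcendental lattice $T_S := \mathrm{NS}(S)^{\perp} \subset H^2(S,\mathbb{Z})$ has rank $22-\rho \geq 2$), and let $\Omega$ span $H^{2,0}(S)$. Principal $E$-bundles over $S$ are classified by $H^1(S, \underline{E})$, where $\underline{E}$ is the sheaf of holomorphic $E$-valued functions. The short exact sequence $0 \to \Lambda \to \mathcal{O}_S \to \underline{E} \to 0$, combined with $H^1(S,\mathbb{Z}) = 0 = H^1(S,\mathcal{O}_S)$, yields an injection $\delta \colon H^1(S, \underline{E}) \hookrightarrow H^2(S, \Lambda)$ whose image is the kernel of the Hodge projection to $H^2(S,\mathcal{O}_S) = H^{0,2}(S)$. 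For $\alpha \in H^1(S, \underline{E})$, write $P_\alpha$ for the associated principal $E$-bundle and $\xi := \delta(\alpha) \in H^2(S, \Lambda) \subset H^2(S, \mathbb{C})$. Since $E$ is abelian, $\mathrm{ad}(P_\alpha) = \mathcal{O}_S$, so the Atiyah class sits in $H^1(S, \Omega^1_S) = H^{1,1}(S)$, and I claim it equals the Hodge $(1,1)$-component $\xi^{(1,1)}$. When this vanishes, the holomorphic connection is unique (as $H^0(S, \Omega^1_S) = 0$), and its curvature in $H^0(S, \Omega^2_S) = \mathbb{C}\cdot \Omega$ is the $(2,0)$-component $\xi^{(2,0)}$.

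With this in hand, I construct $\alpha$ such that $\xi^{(0,2)} = \xi^{(1,1)} = 0$ while $\xi^{(2,0)} \neq 0$. Pick two $\mathbb{Z}$-linearly independent classes $c_1, c_2 \in T_S$; since the $(1,1)$-part of a transcendental class vanishes, I may write $c_i = a_i \Omega + \overline{a_i}\,\overline{\Omega}$. Linear independence forces $\mathrm{Im}(a_1 \overline{a_2}) \neq 0$. Set $\tau := -\overline{a_1}/\overline{a_2}$; this is non-real, and after possibly exchanging $c_2$ with $-c_2$ it lies in the upper half-plane. Let $\Lambda := \mathbb{Z} + \tau \mathbb{Z}$, $E := \mathbb{C}/\Lambda$, and $\xi := c_1 + \tau c_2 \in H^2(S, \Lambda)$. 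The $(0,2)$-part of $\xi$ equals $(\overline{a_1} + \tau \overline{a_2})\,\overline{\Omega} = 0$ by choice of $\tau$, so $\xi$ lifts uniquely to $\alpha \in H^1(S, \underline{E})$; its $(1,1)$-part vanishes since $c_1, c_2 \in T_S$; and its $(2,0)$-part equals $(a_1 + \tau a_2)\,\Omega = \frac{2i\,\mathrm{Im}(a_1 \overline{a_2})}{\overline{a_2}}\,\Omega \neq 0$. Consequently $P_\alpha$ admits a holomorphic connection whose curvature is nonzero, and by the uniqueness statement no flat holomorphic connection can exist. The bundle $P_\alpha$ is nontrivial because $\xi \neq 0$ in $H^2(S, \Lambda)$, by the linear independence of $c_1$ and $c_2$.

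The main technical point is the Hodge-theoretic identification asserted at the end of the first paragraph. I plan to verify it via the \v{C}ech description: if $\{g_{ij}\}$ is a cocycle for $\alpha$ with local holomorphic lifts $\widetilde g_{ij} \colon U_i \cap U_j \to \mathbb{C}$, then the cocycle $\{\widetilde g_{ij} + \widetilde g_{jk} - \widetilde g_{ik}\}$ represents $\xi$ under the map $\check H^2(S, \Lambda) \hookrightarrow \check H^2(S, \mathbb{C})$, the cocycle $\{d\widetilde g_{ij}\} \in \check H^1(S, \Omega^1_S)$ represents the Atiyah class of $P_\alpha$ (a straightforward mimicking of the line bundle case, using the Maurer--Cartan form $dz$ of $E$), and whenever a holomorphic connection $\{\eta_i\}$ exists (so that $\eta_j - \eta_i = d\widetilde g_{ij}$) its curvature is the global $2$-form $d\eta_i \in H^0(S, \Omega^2_S)$. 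Comparing these representatives through the Hodge filtration $F^p H^2(S, \mathbb{C}) = \mathbb{H}^2(S, \Omega^{\geq p}_S)$ of the holomorphic de Rham complex identifies $\xi^{(1,1)}$ with the Atiyah class and $\xi^{(2,0)}$ with the curvature, completing the argument.
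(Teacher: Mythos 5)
Your cohomological framework is fine: the classification of holomorphic principal $E$-bundles by $H^1(S,\underline{E})$, the injectivity of $\delta$ with image $\ker\bigl(H^2(S,\Lambda)\to H^{0,2}(S)\bigr)$, the identification of the Atiyah class with $\xi^{(1,1)}$ and of the curvature of the (unique) holomorphic connection with $\xi^{(2,0)}$, and the uniqueness argument via $H^0(S,\Omega^1_S)=0$ are all correct, and in fact they are a repackaging of the explicit construction in the paper (local primitives $\omega_i$ of a $2$-form with periods in $\Lambda$, transition data $F_{ij}$ with $F_{ij}+F_{jk}+F_{ki}\in\Lambda$, connection $\pi_1^*\omega_i+\pi_2^*dz$). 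Your non-existence argument for a flat connection (uniqueness of the holomorphic connection plus non-vanishing curvature) is even a bit cleaner than the paper's (which uses simple connectedness, triviality and K\"ahlerianity of $E\times S$).

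The gap is in the choice of $S$ and of the classes $c_1,c_2$. It is false that a transcendental class has vanishing $(1,1)$-part: one has $T_S\otimes\mathbb{C}=H^{2,0}\oplus H^{0,2}\oplus\bigl(T_S\otimes\mathbb{C}\cap H^{1,1}\bigr)$, and the last summand has dimension $20-\rho$, so for $\rho<20$ a general (indeed, for a very general projective K3 with $\rho=1$, \emph{every} nonzero) integral transcendental class has nonzero $(1,1)$-component, and no nonzero integral class lies in $H^{2,0}\oplus H^{0,2}$ at all. You cannot evade this by allowing $c_i^{(1,1)}\neq 0$ either: since $\tau$ is non-real and the $c_i^{(1,1)}$ are real classes, $\xi^{(1,1)}=c_1^{(1,1)}+\tau c_2^{(1,1)}=0$ forces $c_1^{(1,1)}=c_2^{(1,1)}=0$; and the existence of two $\mathbb{Z}$-independent integral classes inside $H^{2,0}\oplus H^{0,2}$ forces $\rho=20$. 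So the argument as written (``fix $S$ with $\rho\leq 20$'') does not prove the statement for the surfaces you allow. The fix is to take a singular K3 surface ($\rho=20$), e.g.\ the Kummer surface of $\bigl(\mathbb{C}/\mathbb{Z}[\sqrt{-1}]\bigr)^2$ used in the paper: there $T_S$ has rank $2$ and $T_S\otimes\mathbb{R}=(H^{2,0}\oplus H^{0,2})\cap H^2(S,\mathbb{R})$, which is exactly the paper's hypothesis that the periods of $\Omega$ lie in a lattice of $\mathbb{C}$; with that correction the rest of your proof goes through.
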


\begin{proof}
Let us first remark that there exists projective $K3$-surfaces with holomorphic 2-forms $\Omega$ whose periods belong to a lattice in $\mathbb C$. Indeed, one can
consider $\mathbb{ C}^2$ and let $\Lambda$ be the standard lattice generated by
$(1,0), (\sqrt{-1},0), (0,1), (0,\sqrt{-1})$. Consider the involution $$i\,:\, \mathbb {C}^2 / \Lambda\, \longrightarrow\,
\mathbb {C}^2 / \Lambda$$ of the
torus $Y\,=\,\mathbb {C}^2 / \Lambda$ defined by $(x, \,y) \,\longmapsto\, (-x,\, -y)$. The fixed points of $ i$ are the 16 points of order 2 of $Y$.
Let $S$ be the blow-up of $Y/i $ at these 16 points. This smooth complex projective surface $S$
is a K3 surface (in particular, it is simply connected): a holomorphic 2-form $\Omega$ on it is given by the pull-back of 
the constant form $dz_1\wedge dz_2$ on $\mathbb{C}^2$.

The cohomology group $H^2(S,\, \mathbb{Z})$ is generated by the 16 copies of ${\mathbb C}P^1$ (the exceptional divisors for
the above mentioned blow-up) and by the 
generators of $H^2(Y, \,\mathbb{Z}).$ Now the evaluation of $\Omega$ on each exceptional divisor is zero because the form $\Omega$ is 
pulled back from $Y/i$ and hence vanishes on the exceptional divisors. On each generator of $H^2(Y,\, \mathbb{Z})$ the evaluation of 
$\Omega$ is one of $1/4, -1/4, \sqrt{-1}/4, -\sqrt{-1}/4.$ Therefore, the periods of $\Omega$ lie in a lattice in $\mathbb C$.

Now consider any projective K3 surface S with a nontrivial holomorphic 2-form $\Omega$ whose periods belong to a lattice $\Lambda$ in 
$\mathbb C$. The elliptic curve $\mathbb{C}/\Lambda$ will be denoted by $E$.

We can choose an open cover $\{U_i\}$ of $S$, in analytic topology, such that all $U_i$ and all connected
components of $U_i \cap U_j$ are contractible.
Then on each $U_i$ there exists a holomorphic one-form $\omega_i$ such that $\Omega_i\,=\,d 
\omega_i$. On intersections $U_i \cap U_j$ , we get $\omega_i-\omega_j\,=\,dF_{ij}$, with $F_{ij}$ a holomorphic function defined on $U_i 
\cap U_j$.

On intersections $U_i \cap U_j \cap U_k$, the sum $F_{ij} +F_{jk}+F_{ki}$ is a locally constant cocycle which
represents the class of $\Omega$ in $H^2(S,\, { \mathbb C})$. Hence we can choose the forms $\omega_{i}$ and
the associated functions $F_{ij}$ in such a way that 
$F_{ij}+F_{jk}+F_{ki }$ belongs to the lattice of periods $\Lambda$ for every triple $i,\, j,\, k$.

Consider then every holomorphic function $F_{ij}$ as taking values in the translations group of $E$ and form the holomorphic principal 
elliptic bundle $W_E\, \longrightarrow\, E$
with typical fiber $E$ associated to this one-cocycle. The local forms on $U_i \times E$ which are $\pi_1^*(\omega_i 
)+\pi_2^*dz $,   where $\pi_1$ and $\pi_2$ are the projections on the first and the second factor respectively  and $ z$ is a translation invariant coordinate on the elliptic curve, glue to a global
holomorphic one-form $\omega$ which is a holomorphic
connection on this principal elliptic bundle $W_E$. The differential $d\omega$ projects on $S$ to $\Omega$ and does 
not vanish. Hence, the holomorphic connection $\omega$ is not flat: its curvature is $\Omega$.

Moreover, the above holomorphic principal bundle $W_E$ with fiber type $E$ does not admit any holomorphic flat connection. 
Indeed, since the base $S$ is simply connected if there exists a flat connection on $W_E$, then the holomorphic 
principal bundle $W_E$ is trivial, biholomorphic to $E \times S\, \longrightarrow\, S$. In this case the total space of
the bundle $W_E$ must be 
K\"ahler and all holomorphic one-forms on $W_E$ would be closed: a contradiction since $d \omega \,\neq\, 0$.
\end{proof}

Let us go back to the proof of Theorem \ref{trivial}. 

Let $M$ be a compact K\"ahler manifold of complex dimension $m$ endowed
with a K\"ahler metric $\omega$.
Recall that the degree of a torsionfree coherent analytic sheaf $F$ on $M$ is defined to be
$$
\text{degree}(F)\,:=\, \int_M c_1(\det F)\wedge \omega^{m-1}\, \in\, {\mathbb R}\, ,
$$
where $\det F$ is the determinant line bundle for $F$ \cite[Ch.~V, \S~6]{Ko}.
This degree is well-defined. Indeed, any two
first Chern forms for $\det F$ differ by an exact $2$--form on $M$, and
$$
\int_M (d\alpha)\wedge \omega^{m-1}\,=\, -\int_M \alpha\wedge d\omega^{m-1}\,=\, 0\, .
$$
In fact, this shows that degree is a topological invariant. Define $$\mu(F)\,:=\,
\frac{\text{degree}(F)}{\text{rank}(F)}\, \in\, {\mathbb R}\, ,$$ which is called the
{\it slope} of $F$.
A torsionfree coherent analytic sheaf $F$ on $M$ is called \textit{stable} (respectively,
\textit{semi-stable}) if for every coherent analytic subsheaf $V\,\subset\, F$
with  $\text{rank}(V)\, \in\, [1\, ,\text{rank}(F)-1]$, the inequality
$\mu(V) \, <\, \mu(F)$ (respectively, $\mu(V) \, \leq\, \mu(F)$)
holds (see \cite[Ch.~V, \S~7]{Ko}).

First recall the following very useful lemma from \cite{Bi}:

\begin{lemma} [\cite{Bi}] Let $E$ be a holomorphic vector bundle over a compact  K\"ahler Calabi-Yau manifold. Assume that $E$ admits a holomorphic connection, then $E$ is semi-stable.
\end{lemma}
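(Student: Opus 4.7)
The plan is to combine a Harder--Narasimhan argument with the second fundamental form of the holomorphic connection, leveraging two Calabi--Yau-specific inputs.

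First I would record two preliminary facts. By Atiyah \cite{At}, the existence of a holomorphic connection on $E$ forces all rational Chern classes of $E$ to vanish; in particular $c_1(E)=0$ in $H^2(M,\mathbb{Q})$, so $\mu(E)=0$. On the other hand, Yau's solution of the Calabi conjecture produces a Ricci-flat K\"ahler metric on $M$, which is Hermite--Einstein on $TM$ with vanishing Einstein factor; by Donaldson--Uhlenbeck--Yau this forces $TM$ (and hence $\Omega^1_M$) to be polystable of slope zero, so $\mu_{\max}(\Omega^1_M)=0$. I would use this Ricci-flat representative of the given K\"ahler class as the polarization defining slopes throughout.

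Next, assume for contradiction that $E$ is not semi-stable and let $V\subsetneq E$ be the maximal destabilizing subsheaf of the Harder--Narasimhan filtration. Then $V$ is saturated and semi-stable with $\mu(V)>\mu(E)=0$, the quotient $Q=E/V$ is torsion-free, and $\mu_{\max}(Q)<\mu(V)$ by the defining HN property. The key object is the second fundamental form of $\nabla$,
\[
S_V \,\colon\, V \hookrightarrow E \xrightarrow{\nabla} E\otimes \Omega^1_M \twoheadrightarrow Q\otimes \Omega^1_M,
\]
which is $\mathcal{O}_M$-linear because the $df$-term in the Leibniz rule $\nabla(fv)=f\nabla v+v\otimes df$ lies in $V\otimes \Omega^1_M$ and therefore dies in the quotient. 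If $S_V\equiv 0$, then $\nabla$ restricts to a holomorphic connection on $V$, and Atiyah's theorem applied to $V$ gives $\mu(V)=0$, contradicting $\mu(V)>0$. If instead $S_V\neq 0$, the semi-stability of $V$ yields $\mu(V)\leq \mu_{\max}(Q\otimes \Omega^1_M)$, and the additivity $\mu_{\max}(Q\otimes \Omega^1_M)=\mu_{\max}(Q)+\mu_{\max}(\Omega^1_M)=\mu_{\max}(Q)$ reduces this to $\mu(V)\leq \mu_{\max}(Q)<\mu(V)$, again a contradiction.

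The main obstacle is not the overall strategy, which is a classical template, but justifying the two auxiliary inputs at the required level of generality: the polystability of $\Omega^1_M$ genuinely uses Yau's theorem (merely $c_1(TM)_{\mathbb{R}}=0$ would not be enough a priori), and the additivity $\mu_{\max}(A\otimes B)=\mu_{\max}(A)+\mu_{\max}(B)$ rests on the nontrivial fact that tensor products of semi-stable torsion-free sheaves on a compact K\"ahler manifold remain semi-stable in characteristic zero, itself a consequence of Donaldson--Uhlenbeck--Yau. One should also check that the HN filtration and the saturation operation behave as expected in the compact K\"ahler (not necessarily algebraic) setting, which is standard but worth verifying.
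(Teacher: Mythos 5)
Your proposal follows essentially the same route as the paper: take the maximal destabilizing subsheaf of the Harder--Narasimhan filtration, hit it with the second fundamental form of $\nabla$, use semistability of $TM$ on a Calabi--Yau (via Yau's theorem) to bound $\mu_{\max}(\Omega^1_M)$ by $0$, use subadditivity of $\mu_{\max}$ under tensor product, and close the loop with Atiyah's degree-zero theorem. The case $S_V\neq 0$ is handled exactly as in the paper (the paper only needs $\mu_{\max}(\Omega^1_M)\le 0$ and the inequality $\mu_{\max}(Q\otimes\Omega^1_M)\le\mu_{\max}(Q)+\mu_{\max}(\Omega^1_M)$, but your strengthened statements are correct and rest on the same nontrivial inputs you flag).

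There is, however, one real gap, in the case $S_V\equiv 0$. You conclude that ``$\nabla$ restricts to a holomorphic connection on $V$, and Atiyah's theorem applied to $V$ gives $\mu(V)=0$.'' But Atiyah's vanishing of Chern classes is a statement about holomorphic vector bundles, whereas your $V$ is a priori only a saturated coherent subsheaf of $E$; it is not clear, without an argument, that ``degree zero'' follows for a possibly non-locally-free sheaf carrying a Leibniz-rule operator. This is precisely the point the paper isolates as Lemma \ref{nabla}: a $\nabla$-invariant coherent subsheaf of $(E,\nabla)$ is automatically a subbundle (the proof differentiates a minimal-order syzygy among local generators, using Nakayama's lemma, to lower the vanishing order and reach a contradiction). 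Once $V$ is known to be locally free, and in fact a subbundle, Atiyah's theorem applies and gives $\operatorname{degree}(V)=0$, contradicting $\mu(V)>0=\mu(E)$. So your outline is correct, but to be complete you must either prove this local-freeness statement or cite an equivalent fact (e.g.\ that a coherent sheaf admitting a holomorphic connection is locally free); as written, the step where Atiyah is applied to $V$ is unjustified.
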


\begin{proof} We assume, by contradiction, that $E$ is not semi-stable. Then there exists a maximal semi-stable coherent subsheaf $W$. This $W$ is destabilizing : it has the property that for any
coherent subsheaf $F$ of $E$ such that $W \subset F$ we have the inequality of slopes $\mu (W) > \mu(F)$. This is equivalent to the property that for any subsheaf $F'$ of $E/W$ we have $\mu (W) > \mu(F')$.

Now we show that $W$ is $\nabla$-stable. We consider the second fundamental form $$S\,:\, W
\,\longrightarrow\, \Omega^1(M) \otimes E/W$$ which associates to any local section $s$ of $W$ the projection of 
$\nabla_{\cdot} s$ on $E/W$.

Assume, by contradiction, that the image $S(W)$ is nontrivial. Since $W$ is semi-stable and $S(W)$ is a quotient of $W$ we get $\mu (S(W)) \geq \mu(W)$. On the other hand, $\mu(S(W))$ is at most
 the slope of the maximal semi-stable subsheaf of $\Omega^1(M) \otimes E/W$ which is less than $\mu_1 + \mu_2$, with $\mu_1$ the slope of the maximal semi-stable subsheaf of $\Omega^1(M)$ and $\mu_2$ the slope of the maximal semi-stable subsheaf of $E/W$. But the holomorphic tangent bundle of a Calabi-Yau manifold is semi-stable, meaning that all quotients of $TM$ have nonnegative slope. By duality, this implies that any subsheaf of $\Omega^1(M)$ have nonpositive slope and, consequently, $\mu_1 \leq 0$. On the other hand, we have seen that $\mu_2 < \mu(W)$. 

It follows that $\mu(W) \,\leq\, \mu(S(W)) \,\leq\, \mu_1 + \mu_2 \,<\, \mu(W)$: a contradiction.

This implies $S(W)$ is trivial and, thus $W$ is $\nabla$-stable.

Then the following Lemma \ref{nabla} will finish the proof. Indeed, Lemma \ref{nabla} implies that $W$ is a 
subbundle endowed with a holomorphic connection. Then the degree of $W$ must be zero and we get 
$\mu(W)\,=\,\mu(E)\,=\,0$: a contradiction.
\end{proof} 

\begin{lemma}\label{nabla} Let $E$ be a holomorphic vector bundle over a complex manifold $M$ and
$W \,\subset\, E$ a coherent subsheaf. Assume that $E$ admits a holomorphic connection such that
$W$ is $\nabla$-stable. Then $W$ is a subbundle of $E$.
\end{lemma}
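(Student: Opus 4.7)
My plan is to show that $E/W$ is itself locally free; the short exact sequence
\[
0 \longrightarrow W \longrightarrow E \longrightarrow E/W \longrightarrow 0
\]
of coherent sheaves then splits locally on $M$, realizing $W$ as a local direct summand of the vector bundle $E$, which is exactly what it means for $W$ to be a subbundle. The preliminary observation is that the $\nabla$-stability of $W$ causes $\nabla$ to descend to a holomorphic connection on the coherent quotient $Q := E/W$: for any local section $s$ of $E$, the class of $\nabla s$ in $\Omega^1_M \otimes Q$ depends only on the class of $s$ modulo $W$ (precisely because $\nabla W \subset \Omega^1_M \otimes W$), and the Leibniz rule for this descended map is inherited from $\nabla$.

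The main step is then a classical fact (essentially going back to Atiyah): any coherent analytic sheaf $F$ on a complex manifold admitting a holomorphic connection is locally free. To prove it, I would work stalk-by-stalk. Fix $p \in M$, write $\mathcal O := \mathcal O_{M,p}$ and $\mathfrak m$ for its maximal ideal, and pick a minimal system of generators $f_1, \ldots, f_d$ of the stalk $F_p$. The goal is to show that the module of relations $\mathcal R := \ker(\mathcal O^d \twoheadrightarrow F_p)$ vanishes, which I would establish by proving, by induction on $k \geq 1$, that $\mathcal R \subset \mathfrak m^k \cdot \mathcal O^d$; Krull's intersection theorem then forces $\mathcal R = 0$. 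Minimality of the $f_i$ gives the base case $k=1$. For the inductive step, fix $(a_1, \ldots, a_d) \in \mathcal R$ and apply $\nabla_X$ (for a holomorphic vector field $X$) to the identity $\sum_i a_i f_i = 0$; writing $\nabla_X f_i = \sum_j g^X_{ij} f_j$, which is possible since the $f_i$ generate $F_p$, yields a new relation whose $j$-th component is $X(a_j) + \sum_i a_i g^X_{ij}$. By the inductive hypothesis this new relation lies in $\mathfrak m^k \mathcal O^d$; since the correction $\sum_i a_i g^X_{ij}$ already lies in $\mathfrak m^k$, one deduces $X(a_j) \in \mathfrak m^k$ for every holomorphic vector field $X$, and reading off the Taylor expansion of $a_j$ at $p$ with $X$ ranging over coordinate vector fields forces $a_j \in \mathfrak m^{k+1}$.

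The main obstacle I anticipate is precisely this last extraction step—deducing $a_j \in \mathfrak m^{k+1}$ from $X(a_j) \in \mathfrak m^k$ for all $X$—which uses smoothness of $M$ in an essential way (it would fail at a singular point) but amounts to a direct manipulation of power series. Applying the classical fact to $Q = E/W$ yields the local freeness of $E/W$, and the splitting argument of the first paragraph then concludes that $W$ is a subbundle of $E$.
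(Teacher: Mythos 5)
Your proof is correct, but it takes a route that is organized differently from the paper's. The paper works with $W$ itself: since $\nabla$-stability means $\nabla(W)\subset \Omega^1_M\otimes W$, it chooses local sections $s_1,\dots,s_d$ of $W$ whose values at $p$ span the fibre of $W$ (hence generate $W$ near $p$ by Nakayama) and rules out any nontrivial relation $\sum_i f_i s_i=0$ by applying $\nabla_v$ in a direction that lowers the minimal vanishing order of the coefficients, writing $\nabla_v s_i=\sum_j g_{ij}s_j$; the resulting descent on vanishing orders gives a contradiction, so $W$ is locally free, and the paper then concludes that $W$ is a subbundle. You instead push the connection to the quotient $Q=E/W$ (legitimate: $\nabla$-stability is exactly the condition making the quotient connection well defined, since $\Omega^1_M$ is locally free) and prove the general fact that a coherent sheaf with a holomorphic connection is locally free, via minimal generators, the $\mathfrak{m}$-adic induction on the module of relations, and Krull's intersection theorem. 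The differentiation trick in your inductive step is the same mechanism as the paper's order-lowering step, so the two proofs share their engine, but your packaging buys two things: the auxiliary statement (coherent plus connection implies locally free) is more general and reusable, and local freeness of $E/W$ is literally equivalent to $W$ being a subbundle, so your conclusion is immediate; by contrast, the paper's argument directly yields local freeness of $W$, and passing from there to the subbundle property requires the additional (easy, but unstated) point that the inclusion is fibrewise injective, i.e.\ that the quotient has no torsion --- a point your approach handles automatically. All the individual steps you outline (the base case from minimality, the extraction of $a_j\in\mathfrak{m}^{k+1}$ from $X(a_j)\in\mathfrak{m}^k$ using coordinate vector fields and homogeneous components, and the local splitting once $E/W$ is locally free) are sound.
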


\begin{proof} Pick any $p \,\in\, M$. Since $W$ is a coherent subsheaf one can consider local sections $s_i$ in the neighborhood of $p$ such that $s_i(p)$ span the fiber $W_p$ at $p$. By Nakayama's lemma (see \cite{GH}, p. 680) the sections $s_i$ span $W$ in the neighborhood of $p$. We are left to prove that the sections $s_i$ are linearly independent over the ring of local holomorphic functions.

Assume, by contradiction, that there exists  a nontrivial relation $\sum_i f_i \cdot s_i\,=
\,0$, with $f_i$ local holomorphic functions.

It follows that $f_i(p)\,=\,0$, for all $i$. Consider the integer $n>0$ which is the minimum of vanishing orders of $f_i$ at $p$ (we can assume that this minimum is the vanishing order of $f_1$ at $p$).

We will construct a nontrivial relation for which the minimum of vanishing orders is strictly smaller. 

In local coordinates, one can find a directional derivative (in the direction of some local vector field $v$), such that the holomorphic function $d f_1(v)$, has vanishing order $< n$ at $p$. Then applying $\nabla$ to our relation we get 
$$\nabla_v (\sum_i f_i \cdot s_i)\,=\,\sum_i s_i df_i(v) + \sum_i f_i \nabla_v(s_i)\,=\, 0\, .$$
Since $W$ is $\nabla$-stable, for all $i$ one gets local holomorphic functions $g_{ij}$ such that 
$\nabla_v s_i\,=\, \sum_j g_{ij}s_j$.

We get then the relation $$\sum_i s_i(df_i(v) + f_i \sum_j g_{ij}s_j)\,=\,0$$ for which the minimum of the vanishing orders of the coefficients at $p$ is $<n$.

This is a contradiction. Consequently, $W$ is locally free. It is a subbundle of $E$.
\end{proof}

To finish the proof of Theorem \ref{trivial} we make use of a result of Simpson (\cite[p.~40, Corollary~3.10]{Si}) which states that:

{\it A holomorphic vector bundle $E$ on a complex projective manifold $M$ admits a flat 
holomorphic connection if $E$ is semi-stable with $\text{degree}(E)\,=\, 0\,=\, 
ch_2(E)\wedge \omega^{m-2}$.} Notice that in this statement $\omega$ is a rational K\"ahler class of $M$ and $ch_2$ is the second Chern class.

Consequently,
if the Calabi--Yau manifolds $M$ in Theorem \ref{trivial} is
{\it projective}, and the cohomology class of the K\"ahler form $\omega$ is rational,
then  $E$ admits a flat holomorphic connection,
because $E$ is semi-stable with vanishing Chern classes. Moreover since $M$ is simply connected
all flat bundles on $M$ must be trivial. Therefore, $E$ is the trivial vector bundle.
Since $H^0(M, \, \Omega^1_M)\,=\, 0$, the trivial holomorphic vector bundle has exactly
one holomorphic connection, namely the trivial connection. Hence
$\nabla$ is the trivial connection on $E$.

Theorem \ref{trivial} is now proved. This theorem has the following consequence.

\begin{corollary}\label{flatness}
Let $M$ be a compact simply-connected Calabi-Yau manifold endowed with a generalized holomorphic Cartan geometry $(E_H,\, \omega)$ modelled on $(G,H)$, with $G$ complex semi-simple Lie group (or $G$ simply connected complex Lie group). Then $(E_H, \omega)$ is flat. If $\omega$ is a branched Cartan geometry, $G/H$ must be compact and the branching set must be nontrivial.
\end{corollary}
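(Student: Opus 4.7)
The plan is to first obtain flatness by pushing the problem from the principal $H$-bundle $E_H$ to the associated principal $G$-bundle $E_G$ and invoking Corollary \ref{principal trivial}, then to analyze the developing map to extract the topological consequences in the branched case.

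First, I would use the remark following Definition \ref{general} that the construction in Proposition \ref{existence connection} carries over verbatim to the generalized setting: the form $\omega$ produces a holomorphic connection $\widetilde{\omega}$ on $E_G = E_H \times^H G$, and the curvature of $\widetilde{\omega}$ coincides with $K(\omega) = d\omega + \tfrac{1}{2}[\omega,\omega]$. Since $M$ is a compact simply connected K\"ahler Calabi--Yau manifold and $G$ is either complex semi-simple or simply connected, Corollary \ref{principal trivial} applies. Its proof (via a faithful linear representation of $G$ and Theorem \ref{trivial}) shows that $\widetilde{\omega}$ is a flat connection and that $E_G$ is holomorphically trivial. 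Consequently $K(\omega) = 0$, which is precisely the flatness of the generalized Cartan geometry $(E_H,\omega)$.

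Next, assume $\omega$ defines a branched Cartan geometry, so $\omega$ is an isomorphism on an open dense subset of $E_H$ and in particular $\dim M = \dim G/H$. Using the trivialization of $E_G$ on the simply connected base $M$ and the discussion of the developing map in Section \ref{s2.2}, I obtain a holomorphic map $dev\colon M \longrightarrow G/H$ whose differential has rank equal to the rank of $\omega$. Thus $dev$ has maximal rank on an open dense subset of $M$, and $dev(M)$ is a compact analytic subset of $G/H$ containing an open subset. Assuming $G/H$ connected, this forces $dev(M) = G/H$, whence $G/H$ is compact.

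For the last assertion I would argue by contradiction. If the branching locus were empty, then $\omega$ would define a genuine (unbranched) flat holomorphic Cartan geometry on $M$, and $dev$ would be a local biholomorphism between the compact connected manifolds $M$ and $G/H$. Since $M$ is simply connected, $dev$ would then be a covering map of degree one, i.e.\ a biholomorphism $M \cong G/H$. In the semi-simple case this is impossible: by the structure theory of compact K\"ahler homogeneous manifolds, a compact K\"ahler quotient $G/H$ of a complex semi-simple Lie group is a rational flag variety $G/P$ for a parabolic subgroup $P$, hence Fano with $c_1(T(G/H)) > 0$, contradicting $c_1(TM) = 0$. The main obstacle is exactly this last step: the flatness half of the corollary is almost a formal consequence of Corollary \ref{principal trivial}, but ruling out an unbranched geometry requires the classical classification of compact K\"ahler homogeneous spaces to contradict the Calabi--Yau hypothesis.
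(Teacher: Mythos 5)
Your argument for flatness and for the compactness of $G/H$ is exactly the paper's: push $\omega$ to the holomorphic connection $\widetilde{\omega}$ on $E_G$, apply Corollary \ref{principal trivial} to get flatness of $\widetilde{\omega}$ (hence of $(E_H,\omega)$), then use the developing map, which is a submersion off the branching set, so that its image is open and closed in $G/H$. Where you diverge is the final step (branching set nonempty). The paper argues as follows: if $dev$ were a local biholomorphism it would be a covering, so $M$ would be homogeneous; generically spanning global holomorphic vector fields $X_1,\dots,X_m$ then give a holomorphic section $X_1\wedge\cdots\wedge X_m$ of $\bigwedge^m TM$ whose vanishing divisor represents $c_1(TM)=0$, hence $TM$ is holomorphically trivial; Wang's theorem (proved earlier in the paper) gives $M\cong L/\Gamma$, and K\"ahlerness forces $M$ to be a torus, contradicting simple connectedness. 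Your alternative — identify $M$ with $G/H$ and invoke the classification of compact K\"ahler homogeneous spaces to conclude it is a flag variety $G/P$, hence Fano, contradicting $c_1(TM)=0$ — is a legitimate and arguably quicker route, but as written it only treats the case ``$G$ semi-simple,'' whereas the statement (and the first half of your own proof) also allows $G$ merely simply connected. For that alternative you would need the full Borel--Remmert theorem: a compact homogeneous K\"ahler manifold is a product of a complex torus and a rational homogeneous manifold, and then simple connectedness kills the torus factor while $c_1=0$ kills the flag factor. The paper's Wang-theorem argument has the advantage of covering both hypotheses on $G$ uniformly and of staying within results already established in the text.

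Two smaller points. First, ``covering map of degree one'' is not automatic: simple connectedness of $M$ makes $dev$ a covering onto $G/H$, but its degree is $|\pi_1(G/H)|$, which you have not shown to be $1$ when $G$ is only assumed semi-simple (it is $1$ when $G$ is simply connected and $H$ connected). This is harmless — a finite covering suffices, since positivity of $c_1$ pulls back, or one pushes the K\"ahler form down to $G/H$ — but the step should be stated as a finite covering rather than a biholomorphism. Second, your reduction of the branched case to ``$\dim M=\dim G/H$ and $dev$ of generically maximal rank'' matches the paper's use of the developing map for generalized geometries, and is fine.
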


\begin{proof}
Let $E_G$ be the associated $G$-principal constructed from $E_H$ by extension of the structure group. Now Corollary \ref{principal trivial} 
shows that the induced connection $\widetilde{\omega}$ on $E_G$ is flat. Consequently, the geometry $(E_H,\, \omega)$ is flat.

Assume that the Cartan geometry is branched. The associated developing map is a holomorphic map
$dev \,:
\, M \,\longrightarrow\, G/H$, which is a submersion away from the branching set. Since $M$ is compact,
the image of the developing map must be closed (and open) and it is all of the model $G/H$. That implies that
$G/H$ is compact.

Assume, by contradiction, that the branching set is empty. Then the developing map is a local biholomorphism, so
the developing map must be a covering map. It follows that $M$ is also a homogeneous space. Consider
global holomorphic vector fields
$X_1,\, X_2, \, \cdots ,\,X_m$ on $M$ which are linearly independent and
together span the holomorphic tangent bundle $TM$ at the generic point. Then $X_1 \bigwedge X_2 \bigwedge \cdots \bigwedge X_m$ is
a holomorphic section of $\bigwedge^mTM$. The vanishing divisor of this section is the first Chern class of $TM$ which is zero.
Consequently, the section $X_1 \bigwedge X_2 \bigwedge \cdots \bigwedge X_m$ does not vanish, and the vector fields
$(X_1,\, \cdots,\, X_m)$ trivialize the holomorphic tangent bundle $TM$. By Wang theorem, the complex manifold
$M$ is biholomorphic to a quotient of a complex connected Lie group $L$ by a lattice $\Gamma\, \subset\, L$. Now since $M$
is K\"ahler, and it is biholomorphic to $L/\Gamma$, it follows that $M$ must be a complex torus, in particular
the fundamental group of $M$ is infinite. This is a contradiction because
$M$ is simply connected.
\end{proof}

From Corollary \ref{flatness} and from Corollary \ref{corollary simply connected} we get the following:

\begin{corollary} \label{final}\mbox{}
Non-projective compact simply connected Calabi--Yau manifolds do not admit branched holomorphic projective structures.
\end{corollary}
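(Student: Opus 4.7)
The plan is to argue by contradiction, using the two immediately preceding corollaries as the only inputs. Suppose $M$ is a non-projective compact simply connected K\"ahler Calabi--Yau manifold that admits a branched holomorphic projective structure, i.e.\ a branched holomorphic Cartan geometry of type $(\mathrm{PGL}(m+1,\mathbb{C}),\, Q)$, where $m\,=\,\dim_{\mathbb{C}} M$ and $Q$ is the stabilizer of a point in $\mathbb{C}P^{m}$.

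The first step is to verify that this set-up falls inside the hypotheses of Corollary \ref{flatness}. Since a branched holomorphic Cartan geometry is in particular a generalized holomorphic Cartan geometry (it is the special case where $\omega$ is an isomorphism on an open dense set, as recorded in Section \ref{s2.2}), and since the structure group $G\,=\,\mathrm{PGL}(m+1,\mathbb{C})$ is a connected complex semi-simple Lie group, Corollary \ref{flatness} applies. It therefore forces the branched projective structure on $M$ to be \emph{flat}.

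The second step is to invoke Corollary \ref{corollary simply connected}, which asserts precisely that a non-projective simply connected compact K\"ahler manifold cannot carry a branched flat holomorphic projective structure. Combining this with the conclusion of the previous step yields the required contradiction, and completes the proof.

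There is no real obstacle here: the two corollaries dovetail, and the only thing to check is that $\mathrm{PGL}(m+1,\mathbb{C})$ qualifies as a complex semi-simple group for the application of Corollary \ref{flatness}, which is standard. One may optionally remark, for clarity, that Corollary \ref{flatness} also produces the additional information that $\mathrm{PGL}(m+1,\mathbb{C})/Q\,=\,\mathbb{C}P^{m}$ is compact (which is of course automatic here) and that the branching divisor, if the geometry existed, would have to be nontrivial; but for the statement of Corollary \ref{final} none of this extra structure is needed.
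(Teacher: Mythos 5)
Your argument is correct and is exactly the paper's route: the paper also obtains Corollary \ref{final} by combining Corollary \ref{flatness} (applicable since $\mathrm{PGL}(m+1,\mathbb{C})$ is complex semi-simple and a branched geometry is a generalized one) with Corollary \ref{corollary simply connected}. Nothing is missing; your write-up merely spells out the details the paper leaves implicit.
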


Observe that Corollary \ref{principal trivial} and Corollary \ref{flatness} stand for all complex Lie groups $G$ which admit linear 
representations with discrete kernel.

\section*{Acknowledgements}

We thank the referee for helpful comments. IB is partially supported by a J. C. Bose Fellowship.


\end{document}